\newtheorem{theorem}{Theorem}
\newtheorem{corollary}[theorem]{Corollary}
\newtheorem{lemma}[theorem]{Lemma}
\newtheorem{proposition}[theorem]{Proposition}
\newenvironment{claimproof}{\noindent\textit{Proof.}}{\hfill$\square$}
\theoremstyle{definition}
\newtheorem{definition}[theorem]{Definition}
\newtheorem{example}[theorem]{Example}
\theoremstyle{remark} 
\newtheorem{remark}[theorem]{Remark}
\crefname{remark}{Remark}{Remarks}
\theoremstyle{definition}
\newtheorem{claim}{Claim}
\theoremstyle{remark}
\newtheorem{case}{Case}
\DeclarePairedDelimiter\floor{\lfloor}{\rfloor}
\DeclareMathOperator{\dist}{dist}
\DeclareMathOperator{\lk}{link}
\newcommand{\R}{\mathbb{R}}
\crefname{rmk}{Remark}{Remarks}
\crefname{problem}{Problem}{Problems}
\crefname{example}{Example}{Examples}
\date{\today}
\title{The linkedness of cubical polytopes: The cube}
\author{Hoa T. Bui}
\address{Centre for Informatics and Applied Optimisation, Federation University Australia\\
Faculty of Science and Engineering, Curtin University, Australia}
\email{\texttt{hoa.bui@curtin.edu.au}}
\author{Guillermo Pineda-Villavicencio \& Julien Ugon}
\address{Centre for Informatics and Applied Optimisation, Federation University Australia\\School of Information Technology, Deakin University, Australia}
\email{\texttt{julien.ugon@deakin.edu.au}} 
\email{\texttt{work@guillermo.com.au}}
\thanks{Julien Ugon's research was partially supported by ARC discovery project DP180100602.}
\keywords{$k$-linked, cube,  cubical polytope, connectivity, separator, linkedness}
\subjclass[2010]{Primary 52B05; Secondary 52B12}
\begin{document}
\begin{abstract}

The paper is concerned with the linkedness of the graphs of cubical polytopes. A graph with at least $2k$ vertices is \textit{$k$-linked} if, for every set of $k$ disjoint pairs of vertices, there are $k$ vertex-disjoint paths joining the vertices in the pairs. We say that a polytope is \textit{$k$-linked} if its graph is $k$-linked.

We establish that the $d$-dimensional cube
 is $\floor{(d+1)/2}$-linked, for every $d\ne 3$; this is the maximum possible linkedness of a $d$-polytope. This result implies that, for every $d\ge 1$, a cubical $d$-polytope is  $\floor{d/2}$-linked, which answers a question of Wotzlaw \cite{Ron09}.

Finally, we introduce the notion of strong linkedness, which is slightly stronger than that of linkedness. A graph $G$ is {\it strongly $k$-linked} if it has at least $2k+1$ vertices and, for  every vertex $v$ of $G$, the subgraph $G-v$ is $k$-linked. We show that cubical 4-polytopes are strongly $2$-linked and that, for each $d\ge 1$,  $d$-dimensional cubes  are strongly $\floor{d/2}$-linked. 

\end{abstract}
\maketitle

\section{Introduction}

A (convex) polytope is the convex hull of a finite set $X$ of points in $\R^{d}$; the \textit{convex hull} of $X$ is  the smallest convex set containing $X$. The \textit{dimension} of a polytope in $\R^{d}$ is one less than the maximum number of affinely independent points in the polytope. A polytope of dimension $d$ is referred to as a \textit{$d$-polytope}. 

 A {\it face} of a polytope $P$ in $\R^{d}$ is $P$ itself, or  the intersection of $P$ with a hyperplane in $\R^{d}$ that contains $P$ in one of its closed halfspaces. Faces other than $P$ are polytopes of smaller dimension. A face of dimension 0, 1, and $d-1$ in a $d$-polytope is a \textit{vertex}, an {\it edge}, and a {\it facet}, respectively.  The {\it graph} $G(P)$ of a polytope $P$ is the undirected graph formed by the vertices and edges of the polytope. 
 
This paper studies the linkedness of {\it cubical $d$-polytopes}, $d$-dimensional polytopes with all their facets being cubes. A \textit{$d$-dimensional cube} is the convex hull in $\R^{d}$ of the $2^{d}$ vectors $(\pm 1,\ldots,\pm 1)$. By a cube we mean any polytope that is \textit{combinatorially equivalent} to a cube; that is, one whose face lattice is isomorphic to the face lattice of a cube. 
 
Denote by $V(X)$ the vertex set of a graph or a polytope $X$. Given sets $A,B$ of vertices in a graph, a path from $A$ to $B$, called an {\it $A-B$ path}, is a (vertex-edge) path $L:=u_{0}\ldots u_{n}$ in the graph such that $V(L)\cap A=\{u_{0}\}$  and $V(L)\cap B=\{u_{n}\}$. We write $a-B$ path instead of $\{a\}-B$ path, and likewise, write $A-b$ path instead of $A-\{b\}$. 

Let $G$ be a graph and $X$ a subset of $2k$ distinct vertices of $G$. The elements of $X$ are called {\it terminals}. Let $Y:=\{\{s_{1},t_{1}\}, \ldots,\{s_{k},t_{k}\}\}$ be an arbitrary labelling and (unordered) pairing of all the vertices in $X$. We say that $Y$ is {\it linked} in $G$ if we can find disjoint $s_{i}-t_{i}$ paths for $i\in [1,k]$, where $[1,k]$ denotes the interval $1,\ldots,k$. The set $X$ is {\it linked} in $G$ if every such pairing of its vertices is linked in $G$. Throughout this paper, by a set of disjoint paths, we mean a set of vertex-disjoint paths. If $G$ has at least $2k$ vertices and every set of exactly $2k$ vertices is linked in $G$, we say that $G$ is {\it $k$-linked}. If the graph of a polytope is $k$-linked we say that the polytope is also {\it $k$-linked}.

Unless otherwise stated, we use the graph theoretical notation and terminology from \cite{Die05}, while the polytope theoretical notation and terminology from \cite{Zie95}. Moreover, when referring to graph-theoretical properties of a polytope such as minimum degree, linkedness and connectivity, we mean properties of its graph.

Linkedness is an attractive property of graphs. Being $k$-linked imposes a stronger demand on a graph than just being $k$-connected. Let $G$ be a graph with at least $2k$ vertices, and  let $S:=\{s_{1},\ldots,s_{k}\}$ and $T:=\{t_{1},\ldots,t_{k}\}$ be two disjoint $k$-element sets of vertices in $G$. It follows that, if $G$ is $k$-connected then the sets $S$ and $T$ can be joined \textbf{setwise}  by disjoint paths (namely, by $k$ disjoint $S-T$ paths); this is a consequence of Menger's theorem (\cref{thm:Menger-consequence}). And if $G$ is $k$-linked then the sets can be joined \textbf{pointwise} by disjoint paths.  

From a structural point of view, linkedness guarantees the existence of many subdivisions in a graph.  A graph $Y$ is a \textit{subdivision} of a graph $X$ if it can be obtained from $X$ by subdividing edges of $X$. The definition of $k$-linkedness yields that, if a graph is $k$-linked, then it has a subdivision of every graph on $k$ edges.

From an algorithmic point of view, linkedness is closely related to the classical \textit{disjoint paths problem} \cite{RobSey-XIII}: given a graph $G$ and a set  $Y:=\{\{s_{1},t_{1}\}, \ldots,\{s_{k},t_{k}\}\}$ of $k$ pairs of terminals in $G$, decide whether or not $Y$ is linked in $G$. A natural optimisation version of this problem is to find the largest subset of the pairs so that there exist disjoint paths connecting the selected pairs. The disjoint paths problem has found many applications in the field of transportation networks and computer science in general \cite{Kaw11,KawKobRee12}. It is a special case of a multicommodity flow problem where there exist $k$ different commodities that need to go from  the sources $s_1,s_2,\ldots,s_k$ to $t_1,t_2,\ldots,t_k$; for information on multicommodity flows consult \cite{KorVyg18}, and for further information on the disjoint paths problem consult \cite{KawKobRee12} and the references therein.

All the 2-linked graphs have been characterised \cite{Sey80,Tho80}. In the context of polytopes, one consequence is that, with the exception of simplicial 3-polytopes, no 3-polytope is 2-linked; a {\it simplicial polytope} is one in which every facet is a simplex.  Another consequence is that every 4-polytope is 2-linked. We provide new proofs of these two results: \cref{cor:nonsimplicial-3polytope,prop:4polytopes}.

There is a linear function $f(k)$ such that every $f(k)$-connected graph is $k$-linked, which follows from works of Bollob\'as and Thomason \cite{BolTho96}; Kawarabayashi, Kostochka, and Yu \cite{KawKosYu06}; and Thomas and Wollan \cite{ThoWol05}. In the case of polytopes, Larman and Mani \cite[Thm.~2]{LarMan70} proved that every $d$-polytope  is $\floor{(d+1)/3}$-linked, a result that was slightly improved to $\floor{(d+2)/3}$  in \cite[Thm.~2.2]{WerWot11}.  There are $d$-polytopes that are $d$-connected but not $(d+1)$-connected, and so $\floor{(d+1)/2}$ is an upper bound for the linkedness of $d$-polytopes.

Apart from the work of Larman and Mani \cite{LarMan70}, the study of linkedness in graphs of polytopes has been motivated by a problem in the first edition of the Handbook of Discrete and Computational Geometry  \cite[Problem 17.2.6]{GooORo97-1st}. The problem asked whether or not every $d$-polytope is $\floor{d/2}$-linked. This question had already been answered in the negative by Gallivan \cite{Gal85} in the 1970s with a construction of a $d$-polytope that is not $\floor{2(d+4)/5}$-linked. A weak positive result however follows from \cite{ThoWol05}: every $d$-polytope with minimum degree at least $5d$ is $\floor{d/2}$-linked.

In view of \cite[Problem 17.2.6]{GooORo97-1st} and the negative result of Gallivan \cite{Gal85}, researchers have focused efforts on finding families of $d$-polytopes that are $\floor{d/2}$-linked. Simplicial $d$-polytopes are $\floor{(d+1)/2}$-linked, for every $d\ge 2$ \cite[Thm.~2]{LarMan70}. 
In his PhD thesis \cite[Question~5.4.12]{Ron09}, Wotzlaw asked whether every cubical $d$-polytope is $\floor{d/2}$-linked. In \cref{thm:weak-linkedness-cubical} we answer his question in the affirmative  by establishing that a $d$-cube is $\floor{(d+1)/2}$-linked, for every $d\ne 3$ (see \cref{thm:cube}).  We remark 
that the linkedness of the cube was first established in \cite[Prop.~4.4]{Mes16} as part of a study of linkedness in Cartesian products of graphs, but no self-contained proof was available. 

In a subsequent paper \cite{BuiPinUgo20a}, we prove a stronger result: a cubical $d$-polytope is $\floor{(d+1)/2}$-linked, for every $d\ne 3$; which is best possible. In anticipation of this result, in \cref{prop:link-cubical} we prove that certain cubical $d$-polytopes that are embedded in the $(d+1)$-cube are $\floor{(d+1)/2}$-linked, for every $d\ge 3$.

Let  $X$ be a set of vertices in a graph $G$. Denote by $G[X]$ the subgraph of $G$ induced by $X$, the subgraph of $G$ that contains all the edges of $G$ with vertices in $X$. Write $G-X$ for $G[V(G)\setminus X]$. If $X=\{v\}$, then we write $G-v$ instead of $G-\{v\}$.

Finally, we introduce the notion of strong linkedness, a property marginally stronger than linkedness.
We say that a graph $G$ is {\it strongly $k$-linked} if it has at least $2k+1$ vertices and, for every vertex $v$ of $G$, the subgraph $G-v$ is $k$-linked. We show that cubical 4-polytopes are strongly $2$-linked and that, for each  $d\ge 1$,  $d$-dimensional cubes  are strongly $\floor{d/2}$-linked.

\section{Preliminary results}
\label{sec:preliminary}
This section groups a number of results that will be used in later sections of the paper.

\cref{prop:3-polytopes,prop:4polytopes} follow from the characterisation of 2-linked graphs carried out in \cite{Sey80,Tho80}. Both propositions also have proofs stemming from arguments in the form of \cref{lem:linear-paths}; for the sake of completeness we give such proofs.

\begin{lemma}[{\cite[Thm.~3.1]{Sal67}}] \label{lem:linear-paths} Let $P$ be a $d$-polytope, and let $f$ be a linear function on $\mathbb{R}^{d}$ satisfying $f(x)>0$ for some $x\in P$. If $u$ and $v$ are vertices of $P$ with $f(u)\ge 0$ and $f(v)\ge 0$, then there exists a $u-v$ path $x_{0}x_{1}\ldots x_{n}$ with $x_{0}=u$ and $x_{n}=v$ such that $f(x_{i})>0$ for each $i\in [1,n-1]$.  
\end{lemma}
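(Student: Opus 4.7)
My plan is to route each of the endpoints $u$ and $v$ to a common global maximiser of $f$ along paths that are strictly $f$-increasing (except possibly at the very first edge out of $u$ or $v$), then concatenate the two routes and extract a $u-v$ path. First I would fix a vertex $w^\ast$ of $P$ at which $f$ attains its maximum on $P$; since $f$ is linear, $P$ is compact and $f(x) > 0$ for some $x \in P$, we have $f(w^\ast) \ge f(x) > 0$. Let $F^\ast := \{y \in P : f(y) = f(w^\ast)\}$ denote the face of $P$ on which $f$ attains this maximum.

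The key ingredient I would invoke is the standard linear-programming fact that $f$ admits no strict local maxima on the graph $G(P)$ outside $F^\ast$: if a vertex $z$ of $P$ does not lie in $F^\ast$, then $z$ has a neighbour $z'$ in $G(P)$ with $f(z') > f(z)$. This holds because the edge directions emanating from $z$ generate the tangent cone of $P$ at $z$; if $f$ were non-positive along every such direction, convexity would force $z$ to maximise $f$ on $P$, contradicting $z \notin F^\ast$. Iterating this observation starting from any vertex at which $f$ is already positive produces an $f$-strictly-increasing path into $F^\ast$, which I then extend through $F^\ast$ itself (where $f$ is constantly $f(w^\ast) > 0$) to reach $w^\ast$, using connectivity of the graph of that face.

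To handle the possibility $f(u) = 0$ (and symmetrically $f(v) = 0$), I would apply the same no-local-maximum argument to $u$: since $f(w^\ast) > 0$, $u \notin F^\ast$, so $u$ admits a neighbour $u_1$ with $f(u_1) > 0$, and I take $u \to u_1$ as the first edge of the route. Treating $v$ the same way and concatenating the two routes at $w^\ast$ (reversing the one from $v$) yields a $u-v$ walk in $G(P)$ on which every interior vertex has $f > 0$; the standard cycle-shortcutting procedure then extracts a $u-v$ path with the same interior positivity property. The only substantive obstacle is the no-strict-local-maxima claim, which is classical for polytopes; once it is in hand, the remainder of the argument is essentially bookkeeping about the possibly $f$-zero endpoints.
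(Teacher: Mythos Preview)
Your argument is correct. The paper does not supply its own proof of this lemma; it is quoted verbatim as \cite[Thm.~3.1]{Sal67} and used as a black box, so there is nothing to compare against beyond noting that your route---ascend along strictly $f$-increasing edges to a global maximiser $w^\ast$, walk inside the maximal face $F^\ast$ if needed, do the same from $v$, concatenate, and shortcut---is exactly the classical simplex-style proof one finds in Sallee's paper and in standard treatments of Balinski-type results. The only points worth flagging are the ones you already handled: the possibility $f(u)=0$ (or $f(v)=0$), which forces the first edge to be a strict increase rather than an equality, and the need for connectivity of $G(F^\ast)$ when the increasing path lands at a vertex of $F^\ast$ other than $w^\ast$; both are dealt with correctly.
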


We state Balinski's theorem on the connectivity of polytopes.
 
\begin{theorem}[Balinski {\cite{Bal61}}]\label{thm:Balinski} For every $d\ge 1$, the graph of a $d$-polytope is $d$-connected. \end{theorem}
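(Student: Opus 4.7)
The plan is to show that for any $W\subseteq V(P)$ with $|W|\le d-1$, the graph $G(P)-W$ is connected. I would fix a reference vertex $v^*\in V(P)\setminus W$ (which exists because a $d$-polytope has at least $d+1$ vertices) and connect every other $u\in V(P)\setminus W$ to $v^*$ by a path in $G(P)-W$, routing the paths through the open halfspaces of a carefully chosen hyperplane by means of \cref{lem:linear-paths}.

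Since $|W\cup\{v^*\}|\le d$, the affine hull of $W\cup\{v^*\}$ has dimension at most $d-1$, and thus sits inside some affine hyperplane $H=\{x:g(x)=0\}$ for an affine functional $g$ on $\R^d$. Because $P$ is $d$-dimensional it is not contained in $H$; after replacing $g$ by $-g$ if necessary, we may assume that some vertex of $P$ lies in $V^+:=\{v\in V(P):g(v)>0\}$. Denote by $V^-$ and $V^0$ the analogous sets on which $g$ is negative and zero respectively; by construction $W\cup\{v^*\}\subseteq V^0$.

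I would then apply \cref{lem:linear-paths} in two symmetric cases. For any $u\in V(P)\setminus W$ with $g(u)\ge 0$, applying the lemma to $g$ yields a $u-v^*$ path whose internal vertices satisfy $g>0$, so they lie in $V^+$ and miss $W\subseteq V^0$. For any $u\in V(P)\setminus W$ with $g(u)<0$, the vertex $u$ itself witnesses $-g>0$ somewhere on $P$, and applying the lemma to $-g$ produces a $u-v^*$ path whose internal vertices lie in $V^-$, again missing $W$. Either way $u$ is connected to $v^*$ in $G(P)-W$, so the graph is connected.

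The main conceptual step is the selection of $H$: by forcing $H$ to pass through $v^*$ as well as $W$, both open halfspaces terminate at the common hub $v^*$, so a single invocation of \cref{lem:linear-paths} on each side suffices. Without $v^*\in H$ one would instead have to connect the two halfspaces across the hyperplane directly, which typically demands a separate counting argument for the vertices of $P\cap H$.
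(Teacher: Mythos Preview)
The paper does not prove \cref{thm:Balinski}; it merely states it with a citation to Balinski~\cite{Bal61}. There is therefore no ``paper's own proof'' to compare against.

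Your argument is essentially the classical proof of Balinski's theorem and is correct. One minor remark: \cref{lem:linear-paths} is phrased for a \emph{linear} function $f$, whereas you apply it to an affine functional $g$. This is harmless---translate $P$ so that $H$ passes through the origin, making $g$ linear---and the paper itself performs exactly this manoeuvre in the proof of \cref{prop:3-polytopes} (``this may require a translation of the polytope''). With that understood, both cases of your argument go through: for $g(u)\ge 0$ the hypothesis $g(x)>0$ for some $x\in P$ is guaranteed by your sign choice, and for $g(u)<0$ the vertex $u$ itself witnesses $-g>0$ while $-g(v^*)=0$, so \cref{lem:linear-paths} applies to $-g$ as well. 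The trick of placing the hub $v^*$ on $H$ so that both halfspaces funnel into a common vertex is exactly the standard device.
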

 
A path in the graph is called {\it $X$-valid} if no inner vertex of the path is in $X$. The {\it distance} between two vertices $s$ and $t$ in a graph $G$, denoted $\dist_{G}(s,t)$, is the length of a shortest path between the vertices.  
  
 \begin{definition}[Configuration 3F]\label{def:Conf-3F} Let $X$ be a set of at least four terminals in a 3-cube and let $Y$ be a  labelling and pairing of the vertices in $X$. A terminal of $X$, say $s_{1}$, is in {\it Configuration 3F} if the following conditions are satisfied:
 \begin{enumerate}
 \item[(i)] four vertices of $X$ appear in a 2-face $F$ of the cube;
 \item[(ii)] the terminals in the pair $\left\{s_{1},t_{1}\right\}\in Y$ are at distance two in $F$; and 
\item[(iii)] the neighbours of $t_{1}$ in $F$ are all vertices of $X$.
 \end{enumerate}	
\end{definition}  
 
 Configuration 3F is the only configuration in a 3-cube that prevents the linkedness of a  pairing $Y$ of four vertices, as  \cref{prop:3-polytopes} attests. A sequence $a_{1},\ldots, a_{n}$ of vertices in a cycle is in {\it cyclic order} if, while traversing the cycle, the sequence appears in clockwise or counterclockwise order. It follows that, if pairing $Y:=\left\{\left\{s_{1},t_{1}\right\},\left\{s_{2},t_{2}\right\}\right\}$ of vertices in a 3-cube appears in cyclic order $s_{1},s_{2},t_{1},t_{2}$ in a 2-face, then all the vertices in $Y$ are in Configuration 3F.

\begin{proposition}\label{prop:3-polytopes} Let $G$ be the graph of a 3-polytope and let $X$ be a set of four vertices of $G$. The set $X$ is linked in $G$ if and only if there is no facet of the polytope containing all the vertices of $X$.
\end{proposition}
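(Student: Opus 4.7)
My plan is to treat the two directions separately, using Steinitz's theorem plus a planar Jordan-curve argument for necessity, and Sallee's lemma (\cref{lem:linear-paths}) for sufficiency.

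For necessity ($\Rightarrow$), I would argue the contrapositive. Suppose some facet $F$ of $P$ contains $X$ and let $x_{1},x_{2},x_{3},x_{4}$ be the terminals in their cyclic order around $\partial F$. Since by Steinitz $G$ is a $3$-connected planar graph whose facial cycles are exactly the boundaries of the $2$-faces of $P$, embed $G$ with $F$ as the outer face, placing $x_{1},x_{2},x_{3},x_{4}$ on the outer boundary in cyclic order. A standard Jordan-curve argument in this embedding forces any $x_{1}-x_{3}$ path and any $x_{2}-x_{4}$ path to share a vertex, so the pairing $\{\{x_{1},x_{3}\},\{x_{2},x_{4}\}\}$ is not linkable and $X$ is not linked.

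For sufficiency ($\Leftarrow$), fix an arbitrary pairing $\{s_{1},t_{1}\},\{s_{2},t_{2}\}$ and split on whether the closed segments $\sigma_{1}:=\conv\{s_{1},t_{1}\}$ and $\sigma_{2}:=\conv\{s_{2},t_{2}\}$ intersect. If they are disjoint, strict separation of compact convex sets in $\R^{3}$ produces a linear functional $f$ with $f(s_{1}),f(t_{1})>0$ and $f(s_{2}),f(t_{2})<0$; applying \cref{lem:linear-paths} to $f$ (resp.\ $-f$) yields an $s_{1}-t_{1}$ path with interior vertices in $\{f>0\}$ and an $s_{2}-t_{2}$ path with interior vertices in $\{f<0\}$, and these are vertex-disjoint because their interior vertices sit in disjoint open half-spaces while the four terminals are pairwise distinct.

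If $\sigma_{1}\cap\sigma_{2}\neq\emptyset$, the four terminals are coplanar; let $\Pi$ be the affine plane they span, which is $2$-dimensional since no three vertices of a polytope are collinear. Were $\Pi$ a supporting hyperplane, $P\cap\Pi$ would contain three affinely independent vertices of $X$ and hence be a facet of $P$ containing $X$, contradicting the hypothesis. So $\Pi$ cuts through $\inte P$, splitting $P$ into $3$-polytopes $P^{+},P^{-}$ with common facet $P\cap\Pi$; each terminal, being an extreme point of $P$ lying on $\Pi$, remains extreme after slicing and is therefore a vertex of both $P^{\pm}$. Choose a linear functional $f$ vanishing on $\Pi$ and positive on the open half-space containing $\inte P^{+}$, and apply \cref{lem:linear-paths} inside $P^{+}$ to $\{s_{1},t_{1}\}$ and inside $P^{-}$ to $\{s_{2},t_{2}\}$. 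This yields paths $L_{1},L_{2}$ whose interior vertices lie strictly on opposite open sides of $\Pi$; since every auxiliary vertex and every new edge introduced by the slice lies on $\Pi$ itself, the strict sign condition forces the $L_{i}$ to consist of original vertices and edges of $G$, and vertex-disjointness is immediate.

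The main obstacle is the coplanar subcase of sufficiency: one has to verify that the Sallee paths extracted from the sliced polytopes $P^{\pm}$ are genuine paths of $G$, and not walks using the auxiliary vertices or edges created by the slice. The strict inequality on interior vertices delivered by \cref{lem:linear-paths} is the tool that rules these artefacts out, since they all live on $\Pi$ while the interior vertices of the paths live strictly off $\Pi$.
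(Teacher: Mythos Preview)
Your necessity argument matches the paper's. For sufficiency, both you and the paper rely on Sallee's \cref{lem:linear-paths}, but the organisation differs. The paper fixes the hyperplane $H$ through \emph{three} of the terminals, say $s_{1},s_{2},t_{1}$, and then splits into two cases: if $H$ supports a facet $F$, an $s_{1}-t_{1}$ path is found inside $G(F)-\{s_{2}\}$ via Balinski, while $s_{2}-t_{2}$ is found via \cref{lem:linear-paths}; if $H$ meets $\inte P$, both paths come from \cref{lem:linear-paths} applied to $P$ with $\pm f$. You instead split on whether the segments $\sigma_{1},\sigma_{2}$ meet.

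Your argument is correct, but the slicing in the coplanar subcase is an unnecessary detour. Once you know $\Pi$ is not supporting, all four terminals satisfy $f=0$ and there are points of $P$ on both sides of $\Pi$; so you can apply \cref{lem:linear-paths} directly to $P$ with $f$ and with $-f$, obtaining paths in $G(P)$ with inner vertices strictly on opposite sides, exactly as in the paper's interior case. This sidesteps the whole issue of checking that paths in $G(P^{\pm})$ descend to $G(P)$. On that point, your justification has a small wrinkle: not every new edge of $P^{\pm}$ lies on $\Pi$---a truncated edge $[a,c]$, coming from an edge of $P$ crossing $\Pi$, has $c\in\Pi$ but $a\notin\Pi$. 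Your conclusion is still fine, since every vertex of your Sallee path is an original vertex of $P$ (inner vertices have $f\ne 0$, endpoints are terminals), and an edge of $P^{+}$ between two original vertices with at least one off $\Pi$ must already be an edge of $P$; but the reasoning you gave is not quite the right one.

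What your case split buys is that you never need the Balinski/supporting-facet case: by putting all four terminals on $\Pi$, the hypothesis that no facet contains $X$ immediately forces $\Pi$ through the interior. The paper's choice of three terminals on $H$ trades this for a single uniform hyperplane choice at the cost of handling the supporting case separately.
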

\begin{proof} Let $P$ be a 3-polytope embedded in $\R^{3}$ and let  $X$ be an arbitrary set of four vertices in $G$. We first establish the necessary condition by proving the contrapositive. Let $F$ be a 2-face containing the vertices of $X$ and  consider a planar embedding of $G$ in which $F$ is the outer face. Label the vertices of $X$ so that they appear in the cyclic order $s_{1},s_{2},t_{1},t_{2}$. Then the paths $s_{1}-t_{1}$ and $s_{2}-t_{2}$ in $G$ must inevitably intersect, implying that $X$ is not linked.

Assume there  is no 2-face of $P$ containing all the vertices of $X$. Let $H$ be a (linear) hyperplane that contains $s_{1}$, $s_{2}$ and $t_{1}$, and let $f$ be a linear function that vanishes on $H$ (this may require a translation of the polytope). Without loss of generality, assume that $f(x)>0$ for some $x\in P$ and that $f(t_{2})\ge 0$. 

First consider the case that $H$ is a supporting hyperplane of a 2-face $F$.  The subgraph $G(F)-\{s_{2}\}$ is connected by Balinski's theorem (\cref{thm:Balinski}), and so there is an $X$-valid $L_{1}:=s_{1}-t_{1}$ path on  $G(F)$. Then, use \cref{lem:linear-paths} to find an $L_{2}:=s_{2}-t_{2}$ path in which each inner vertex has positive $f$-value. The paths $L_{1}$ and $L_{2}$ are clearly disjoint. 

Now consider the case that $H$ intersects the interior of $P$. Then there is a vertex in $P$ with $f$-value  greater than zero and a vertex with $f$-value less than zero. Use \cref{lem:linear-paths} to find  an $s_{1}-t_{1}$ path in which each inner vertex has negative $f$-value and an $s_{2}-t_{2}$ path in which each inner vertex has positive $f$-value. 
\end{proof}

The subsequent corollary follows at once from \cref{prop:3-polytopes}. 
\begin{corollary} No nonsimplicial 3-polytope is 2-linked.
\label{cor:nonsimplicial-3polytope}
\end{corollary}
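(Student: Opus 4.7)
The plan is to derive the corollary as an immediate consequence of the necessary direction of \cref{prop:3-polytopes}. Recall that a simplicial $3$-polytope is one whose every facet (that is, every $2$-face) is a triangle, so a nonsimplicial $3$-polytope must possess at least one $2$-face $F$ that is not a triangle.

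Given such an $F$, I would note that $F$, being a $2$-dimensional polytope with more than three vertices, has at least four vertices on its boundary cycle. Pick any four of them and call this set $X$; then $X$ consists of four distinct vertices all contained in a single facet of the $3$-polytope. By the necessary condition of \cref{prop:3-polytopes} (equivalently, its contrapositive, which is the direction proved by the explicit planar cyclic-order argument in the proof of that proposition), the set $X$ fails to be linked. Hence the polytope is not $2$-linked, as there exists a $4$-vertex set that is not linked.

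The only thing to check is that the reduction to \cref{prop:3-polytopes} is clean, i.e.\ that four vertices in a common $2$-face of a $3$-polytope really do satisfy the hypothesis used there; this is immediate because a $2$-face of a $3$-polytope is a facet. There is no real obstacle to overcome — the corollary is a direct, one-line specialisation of \cref{prop:3-polytopes}, and the only care needed is to make the choice of the nontriangular facet explicit.
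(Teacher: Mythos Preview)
Your argument is correct and is exactly the intended one: the paper states that the corollary ``follows at once from \cref{prop:3-polytopes}'', and your reduction---choose a nontriangular facet, pick four of its vertices, and apply the necessary direction of \cref{prop:3-polytopes}---is precisely how one unpacks that remark.
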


 The same reasoning employed in the proof of the sufficient condition of \cref{prop:3-polytopes} settles \cref{prop:4polytopes}.

\begin{proposition}[2-linkedness of 4-polytopes]\label{prop:4polytopes} Every 4-polytope is 2-linked.
\end{proposition}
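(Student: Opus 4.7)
The plan is to adapt the hyperplane-based argument used in the proof of \cref{prop:3-polytopes}, exploiting the extra degree of freedom available in $\R^{4}$. Let $P$ be a 4-polytope and let $Y=\{\{s_{1},t_{1}\},\{s_{2},t_{2}\}\}$ be an arbitrary pairing of four distinct vertices of $P$. Since no three distinct vertices of a convex polytope are collinear, the 2-plane $\Pi:=\aff\{s_{1},s_{2},t_{1}\}$ is well-defined, and the hyperplanes of $\R^{4}$ containing $\Pi$ form a 1-parameter family. This extra freedom, absent in the $\R^{3}$ setting of \cref{prop:3-polytopes}, will allow me to bypass the supporting-hyperplane subcase entirely and reduce everything to the ``$H$ meets the interior'' case.

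I would choose $H$ in this 1-parameter family so that $H$ intersects the interior of $P$. Such an $H$ must exist: otherwise every hyperplane of the family would support $P$, forcing $P\subseteq\Pi$, contradicting $\dim P=4$. After a translation of $P$, let $f$ be a linear function vanishing on $H$, with its sign chosen so that $f(t_{2})\ge 0$. Then $f(s_{1})=f(s_{2})=f(t_{1})=0$, and $f$ takes both positive and negative values on $P$ because $H$ meets the interior.

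Next, I would apply \cref{lem:linear-paths} twice. Using $f$ itself, the hypotheses $f(s_{2}),f(t_{2})\ge 0$ and $f>0$ somewhere on $P$ yield an $s_{2}$--$t_{2}$ path $L_{2}$ whose inner vertices all satisfy $f>0$. Using $-f$, the hypotheses $-f(s_{1})=-f(t_{1})=0$ and $-f>0$ somewhere on $P$ yield an $s_{1}$--$t_{1}$ path $L_{1}$ whose inner vertices all satisfy $f<0$.

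It remains to verify disjointness. The inner vertices of $L_{1}$ and $L_{2}$ lie on opposite strict sides of $H$, so they cannot coincide; the four endpoints $s_{1},s_{2},t_{1},t_{2}$ are distinct; and none of $s_{1},s_{2},t_{1}$ (with $f=0$) or $t_{2}$ (with $f\ge 0$) can occur as an inner vertex of either path, since $L_{1}$'s inner vertices require $f<0$ and $L_{2}$'s require $f>0$. Hence $Y$ is linked in $G(P)$; since $Y$ was arbitrary, $P$ is 2-linked. I do not anticipate a serious obstacle in executing this plan; the only delicate point is the selection of $H$, which the dimension count above guarantees is always possible.
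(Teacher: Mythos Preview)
Your argument is correct and uses the same hyperplane/linear-functional strategy as the paper, with one neat simplification: the paper passes a hyperplane $H$ through all four terminals and then splits into the cases ``$H$ supports a facet'' versus ``$H$ meets $\inte P$'' (the former handled via Balinski's theorem on the facet), whereas you pass $H$ through only the three terminals $s_{1},s_{2},t_{1}$ and use the extra degree of freedom in $\R^{4}$ to arrange that $H$ always meets the interior, collapsing everything to a single case. One small imprecision at your ``delicate point'': if every hyperplane through $\Pi$ supported $P$, the correct conclusion is only $\dim P\le 3$, not $P\subseteq\Pi$ (think of the projection of $P$ to $\R^{4}/\Pi\cong\R^{2}$ being a segment emanating from the origin); this still gives the needed contradiction. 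A clean direct construction that avoids this subtlety: pick any $p\in\inte P\setminus\Pi$ (possible since $\dim\Pi=2<4$) and set $H=\aff(\Pi\cup\{p\})$.
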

\begin{proof} Let $G$ be the graph of a 4-polytope embedded in $\mathbb R^{4}$. Let $X$ be a given set of four vertices in $G$ and let $Y:=\{\{s_{1},s_{2}\},\{t_{1},t_{2}\}\}$ a labelling and pairing of the vertices in $X$. 

Consider a linear function $f$ that vanishes on a linear hyperplane $H$ passing through $X$. Consider the two cases in which either $H$ is  a supporting hyperplane of a facet $F$ of $P$ or $H$ intersects the interior of $P$.

Suppose $H$ is a supporting hyperplane of a facet $F$. First, find an $s_{1}-t_{1}$ path in the subgraph $G(F)-\{s_{2},t_{2}\}$, which is connected by Balinski's theorem (\cref{thm:Balinski}). Second, use \cref{lem:linear-paths} to find an $s_{2}-t_{2}$ path that touches $F$ only at $\{s_{2},t_{2}\}$.     

If instead $H$ intersects the interior of $P$ then there is a vertex in $P$ with $f$-value  greater than zero and a vertex with $f$-value less than zero. Use \cref{lem:linear-paths} to find  an $s_{1}-t_{1}$ path in which each inner vertex has negative $f$-value and an $s_{2}-t_{2}$ path in which each inner vertex has positive $f$-value. \end{proof}

\section{$d$-cube}  
\label{sec:cube}

Consider the $d$-cube $Q_{d}$. Let   $v$ be a vertex in $Q_{d}$ and let $v^{o}$ denote the vertex at distance $d$ from $v$, called the vertex {\it opposite} to $v$. Besides, denote  by $F^o$ the facet disjoint from a facet $F$ of $Q_{d}$; we say that $F$ and $F^{o}$ is a pair of {\it opposite} facets. 

\begin{definition}[Projection $\pi$]\label{def:projection}
	For a pair of opposite facets $\{F,F^{o}\}$ of $Q_{d}$, define a projection $\pi^{Q_{d}}_{F^{o}}$ from $Q_{d}$ to $F^{o}$ by sending a vertex $x\in F$ to the unique neighbour $x^{p}_{F^o}$ of $x$ in $F^{o}$, and  a vertex $x\in F^{o}$ to itself (that is, $\pi^{Q_{d}}_{F^{o}}(x)=x$); write $\pi^{Q_{d}}_{F^o}(x)=x^{p}_{F^o}$ to be precise, or write $\pi(x)$ or $x^{p}$ if the cube $Q_{d}$ and the facet $F^o$ are understood from the context. 
\end{definition}

\Cref{def:projection} is exemplified in \cref{fig:3cube-projection}(a). 
We extend this projection to sets of vertices: given a pair $\{F,F^{o}\}$ of opposite facets and a set $X\subseteq V(F)$, the projection $X^{p}_{F^{o}}$ or $\pi^{Q_{d}}_{F^{o}}(X)$ of $X$ onto $F^{o}$ is the set of the projections  of the vertices in $X$ onto $F^{o}$. For an $i$-face $J\subseteq F$, the projection $J^{p}_{F^{o}}$ or $\pi_{F^{o}}^{Q_{d}}(J)$ of $J$ onto $F^{o}$ is the $i$-face consisting of the projections of all the vertices of $J$ onto $F^{o}$. For a pair $\{F,F^{o}\}$ of opposite facets in $Q^{d}$, the restrictions of the projection $\pi_{F^{o}}$ to $F$  and the projection $\pi_{F}$ to $F^{o}$ are bijections.

Let  $Z$ be a set of vertices in the graph of a $d$-cube $Q_{d}$. If, for some pair of opposite facets $\{F,F^o\}$, the set $Z$ contains both a vertex $z\in V (F)\cap Z$ and its projection $z_{F^{o}}^p\in V (F^o)\cap Z$, we say that the pair $\{F,F^o\}$ is {\it associated} with the set $Z$ in $Q_{d}$ and that $\{z,z^p\}$ is an {\it associating pair}.  Note that an associating pair can associate only one pair of opposite facets. See \cref{fig:3cube-projection}(b)-(c). 

The next lemma lies at the core of our methodology. 
 
 \begin{figure}     
\includegraphics{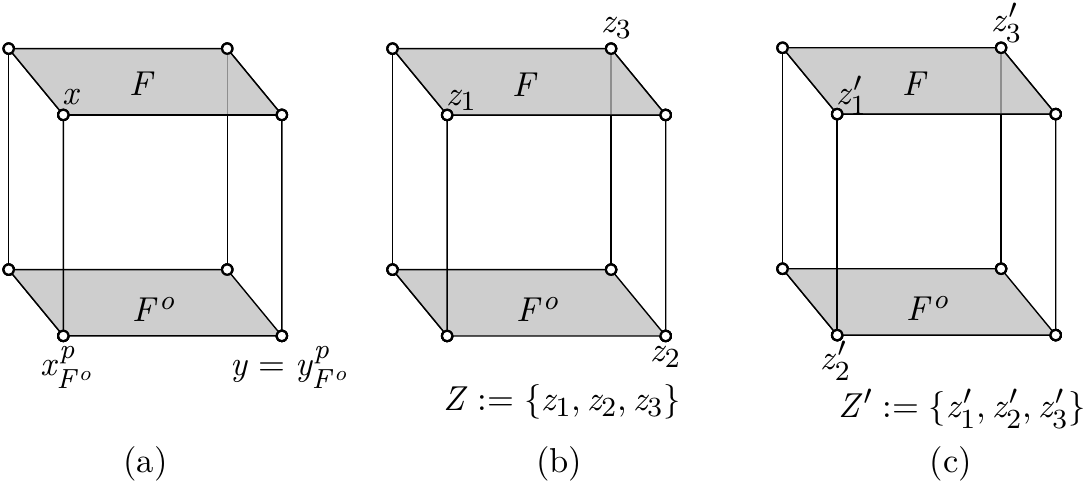}
\caption{The 3-cube with a pair $\{F,F^o\}$ of opposite facets highlighted.  (a) Examples of the projection $\pi_{F^o}$ for the pair $\{F,F^o\}$. (b) The pair $\{F,F^O\}$ is not associated with the set $Z:=\{z_1,z_2,z_3\}$. (c)  The pair $\{F,F^O\}$ is associated with the set $Z':=\{z'_1,z'_2,z'_3\}$, and $\{z'_1,z'_2\}$ is an associating pair.} \label{fig:3cube-projection} 
\end{figure}

\begin{lemma}\label{lem:facets-association} Let $Z$ be a nonempty subset of $V(Q_{d})$. Then the number of pairs $\{F,F^o\}$ of opposite facets associated with $Z$ is at most $|Z|-1$. 
\end{lemma}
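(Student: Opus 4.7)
The plan is to translate the lemma into a statement about edge labels of the induced subgraph $Q_{d}[Z]$ and conclude via a short forest argument. Place $Q_{d}$ at the standard coordinate cube $\{\pm 1\}^{d}$, so that the $d$ pairs of opposite facets correspond bijectively to the $d$ coordinate directions; the pair $\{F,F^{o}\}$ indexed by direction $i$ is associated with $Z$ precisely when there exist $z,z^{p}\in Z$ differing in exactly the $i$-th coordinate, i.e.\ precisely when the induced subgraph $Q_{d}[Z]$ contains some edge of ``label'' $i$. Letting $L$ denote the set of coordinate directions appearing as a label on some edge of $Q_{d}[Z]$, the quantity we wish to bound is $|L|$.

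For each $i\in L$, select a single edge $e_{i}$ of $Q_{d}[Z]$ carrying label $i$, and set $E_{L}:=\{e_{i}:i\in L\}$. The crux is to show that $E_{L}$ is a forest in $Q_{d}[Z]$, from which $|L|=|E_{L}|\le |Z|-1$ follows at once. Suppose, for contradiction, that $E_{L}$ contains a cycle $v_{1}v_{2}\cdots v_{k}v_{1}$. Each edge $v_{j}v_{j+1}$ of the cycle flips exactly one coordinate, say coordinate $i_{j}\in L$. Since we chose one representative edge per label, the indices $i_{1},\ldots,i_{k}$ are pairwise distinct. On the other hand, returning to $v_{1}$ along the cycle requires every coordinate to be flipped an even number of times, so each coordinate appearing among $i_{1},\ldots,i_{k}$ must appear at least twice, contradicting the distinctness just derived.

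I do not anticipate any real obstacle. The one non-obvious step is the initial reformulation: recognising that the number of pairs $\{F,F^{o}\}$ associated with $Z$ is simply the number of distinct coordinate labels realised by edges of $Q_{d}[Z]$. Once this reformulation is in hand, the even-parity observation for closed walks in the cube makes the forest argument essentially a one-liner. An alternative route would be induction on $|Z|$ combined with the observation that, for any label $i$ that actually appears, deleting the edges of label $i$ splits $Q_{d}[Z]$ into the two halves $Z\cap F$ and $Z\cap F^{o}$ of strictly smaller size, but the forest/parity argument above is cleaner and avoids bookkeeping over components.
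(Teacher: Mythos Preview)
Your proof is correct and follows essentially the same route as the paper's: both reinterpret associated pairs of facets as coordinate directions appearing among the edges of $G[Z]$, invoke the parity observation that any cycle in $Q_{d}$ traverses each direction an even number of times, and then bound the number of directions by the edge count of a forest on $Z$. The only cosmetic difference is that the paper prunes $G[Z]$ down to a spanning forest and checks that no direction is lost in the process, whereas you pick one representative edge per direction up front and verify directly that this selection is acyclic.
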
 

\begin{proof}
Let $G:=G(Q_{d})$ and let $Z\subset V(Q_{d})$ with $|Z|\ge 1$ be given. Consider a pair $\{F,F^{o}\}$ of opposite facets. Define a {\it direction} in the cube as the set of the $2^{d-1}$ edges between $F$ and $F^{o}$; each direction corresponds to a pair of opposite facets.  The $d$ directions partition the edges of the cube into sets of cardinality $2^{d-1}$.  (The notion of direction stems from thinking of the cube as a zonotope \cite[Sec.~7.3]{Zie95}) 

A pair of facets is associated with the set $Z$ if and only if the subgraph $G[Z]$ of $G$ induced by $Z$ contains an edge from the corresponding direction.

If a direction is present in a cycle $C$ of $Q_{d}$, then the cycle contains at least two edges from this direction. Indeed, take an edge $e=uv$  on $C$ that belongs to a direction between a pair $\{F,F^{o}\}$ of opposite facets.   After traversing  the edge $e$ from $u\in V(F)$ to $v\in V(F^{o})$, for the cycle to come back to the facet $F$, it must contain another edge from the same direction.   Hence, by repeatedly removing edges from cycles in $G[Z]$ we obtain a spanning forest of  $G[Z]$ that contains an edge for every direction present in $G[Z]$. As a consequence, the number of such directions is at most the number of edges in the forest, which is upper bounded by $|Z|-1$. (A {\it forest} is a graph with no cycles.) \end{proof}

The relevance of the lemma stems from the fact that a pair of opposite facets $\{F,F^{o}\}$ not associated with a given set of vertices $Z$ allows each vertex $z$ in $Z$ to have ``free projection''; that is, for every $z\in Z\cap V(F)$ the projection $\pi_{F^o}(z)$ is not in $Z$, and for $z\in Z\cap V(F^{o})$ the projection $\pi_{F}(z)$ is not in $Z$.

\section{Connectivity of the $d$-cube}

We next unveil some further properties of the cube that will be used in subsequent sections.

Given sets $A,B,X$ of vertices in a graph $G$,  the set  $X$  {\it separates} $A$ from $B$ if every $A-B$ path in the graph contains a vertex from $X$. A set $X$ separates two vertices $a,b$ not in $X$ if it separates $\{a\}$ from $\{b\}$. We call the set $X$ a {\it separator} of the graph. We will also require the following three assertions. 
 
\begin{proposition}[{\cite[Prop.~1]{Ram04}}]\label{prop:cube-cutsets} Any separator $X$ of cardinality $d$ in $Q_{d}$ consists of the $d$ neighbours of some vertex in the cube.
\end{proposition}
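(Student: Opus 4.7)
The plan is to proceed by induction on $d$. The base case $d=2$ is by inspection of $Q_{2}=C_{4}$: the only size-$2$ separators are pairs of opposite vertices, and each such pair is precisely $N(v)$ for either of the two remaining vertices $v$.

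For the inductive step, let $X$ be a separator of size $d$ in $Q_d$, and for each pair $\{F,F^o\}$ of opposite facets write $X_F:=X\cap V(F)$ and $X_{F^o}:=X\cap V(F^o)$. I would first observe that $X$ must meet both sides of every such pair: if $X_F=\emptyset$, then $F$ stays intact in $Q_d-X$ while every vertex of $V(F^o)\setminus X$ reaches $F$ through its projection, so $Q_d-X$ is connected, a contradiction. Next, pick a pair $\{F,F^o\}$ minimising $k:=|X_F|$, so $1\le k\le\lfloor d/2\rfloor$, and prove $k=1$. If $k\ge 2$, then $d\ge 2k\ge 4$ and both $|X_F|,|X_{F^o}|\le d-2$, so by Balinski's theorem (\cref{thm:Balinski}) each of $F-X_F$ and $F^o-X_{F^o}$ is connected; only $d-k$ of the $2^{d-1}-k$ vertices of $F-X_F$ can project into $X_{F^o}$, leaving at least $2^{d-1}-d\ge 1$ cross edges in $Q_d-X$ and forcing $Q_d-X$ connected, a contradiction.

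So $k=1$, $X_F=\{x\}$, and $|X_{F^o}|=d-1$. An analogous projection count forces $X_{F^o}$ to separate $F^o$ (else $Q_d-X$ would again be connected), and the inductive hypothesis yields $X_{F^o}=N_{F^o}(w)$ for some $w\in V(F^o)$. It remains to show $x=w^p$, for then $X=\{w^p\}\cup N_{F^o}(w)=N_{Q_d}(w)$. Assume $x\ne w^p$. Then the edge $ww^p$ lies in $Q_d-X$ and joins $w$ to the connected set $F-\{x\}$. For $d\ge 4$, the set $W:=V(F^o)\setminus(\{w\}\cup N_{F^o}(w))$ has $|W|=2^{d-1}-d\ge 4$, is a connected induced subgraph of $F^o$ (the vertices of $Q_{d-1}$ at Hamming distance at least $2$ from $w$ span a connected subgraph, by a direct bit-flip argument), and at most one of its vertices can project to $x$; hence $W$ also attaches to $F-\{x\}$, exhausting $V(Q_d)\setminus X$ and contradicting that $X$ separates. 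For $d=3$, $W$ reduces to the single vertex $u$ antipodal to $w$ in $F^o\cong C_4$: either $u^p\ne x$ and the same argument closes, or $u^p=x$ and then $u$ is isolated in $Q_3-X$ with $X=N_{Q_3}(u)$.

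The main obstacle is this last step. Two subsidiary facts are needed there: that the vertices of $Q_{d-1}$ at Hamming distance at least $2$ from a fixed vertex span a connected induced subgraph (established by flipping bits in an order that never drops the distance from the fixed vertex below $2$), and the separate handling of $d=3$, which is needed because in $C_{4}$ a vertex and its antipode share the same neighborhood, so either of two candidate vertices may turn out to be the \emph{centre} of $X$.
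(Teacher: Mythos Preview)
The paper does not prove \cref{prop:cube-cutsets}; it is quoted from \cite{Ram04} without argument, so there is no in-paper proof to compare against.

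Your inductive argument is correct and self-contained. The skeleton---(a) $X$ meets both sides of every opposite-facet pair, (b) the minimum side has $k=1$ via Balinski plus the projection count $2^{d-1}-d\ge 1$, (c) the same count forces $X_{F^o}$ to be a minimum separator of $F^o$, so induction gives $X_{F^o}=N_{F^o}(w)$, (d) pin down $x=w^p$---is sound. The only subsidiary fact that is not entirely routine is the connectedness of $W=V(F^o)\setminus(\{w\}\cup N_{F^o}(w))$ for $d\ge 4$, and your bit-flip sketch handles it: reduce any vertex of weight $\ge 2$ to one of weight exactly $2$ by turning off $1$'s, then link $\{i,j\}$ to $\{j,\ell\}$ through $\{i,j,\ell\}$, which requires $d-1\ge 3$. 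Your separate treatment of $d=3$ is necessary and correctly identifies that when $u^p=x$ the centre of $X$ is $u$ rather than $w$, since in $C_4$ the antipode of $w$ has the same neighbourhood as $w$.
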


A set of vertices in a graph is {\it independent} if no two of its elements are adjacent. Since there are no triangles in a $d$-cube,  \cref{prop:cube-cutsets} gives at once the following corollary.
 
 \begin{corollary}\label{cor:separator-independent}  A separator of cardinality $d$ in a $d$-cube is an independent set.
 \end{corollary}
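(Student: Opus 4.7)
The plan is to apply \cref{prop:cube-cutsets} directly and then invoke the triangle-freeness of the cube.

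By \cref{prop:cube-cutsets}, any separator $X$ of cardinality $d$ in $Q_{d}$ equals the neighbourhood $N(v)$ of some vertex $v\in V(Q_{d})$. I would then argue that $N(v)$ is independent: if two vertices $x,y\in N(v)$ were adjacent, then $v,x,y$ would form a triangle in $Q_{d}$. However, $Q_{d}$ is bipartite (its vertices $(\pm 1,\ldots,\pm 1)$ split according to the parity of the number of $+1$ coordinates, and every edge flips exactly one coordinate), so it contains no odd cycle and in particular no triangle. Hence no two vertices of $N(v)$ can be adjacent, so $X=N(v)$ is independent.

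There is no real obstacle here; the statement is essentially a one-line consequence of the preceding proposition together with the well-known fact that the cube graph is triangle-free (which is stated in the paper as the hypothesis that there are no triangles in a $d$-cube). The only small point worth spelling out, if one wants to be self-contained, is a sentence explaining why the cube has no triangles, which follows from its bipartiteness as noted above.
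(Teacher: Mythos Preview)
Your proposal is correct and follows exactly the paper's own reasoning: the paper derives the corollary directly from \cref{prop:cube-cutsets} together with the observation that there are no triangles in a $d$-cube. Your added justification of triangle-freeness via bipartiteness is a harmless elaboration of what the paper simply asserts.
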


\begin{remark}\label{rmk:cubical-common-neighbours} If $x$ and $y$ are vertices of a cube, then they share at most two neighbours. In other words, the complete bipartite graph $K_{2,3}$ is not a subgraph of the cube; in fact, it is not an induced subgraph of any simple polytope \cite[Cor.~1.12(iii)]{PfePilSan12}. 
\end{remark}

\section{Linkedness of the $d$-cube} In this section, we establish the linkedness of $Q_{d}$  (\cref{thm:cube}). We make heavy use of Menger's theorem \cite[Thm.~3.3.1]{Die05} henceforth, and so we remind the reader of the theorem and one of one of its consequences.

\begin{theorem}[Menger's theorem, {\cite[Sec.~3.3]{Die05}}]\label{thm:Menger} Let $G$ be a  graph, and let $A$ and $B$ be two subsets of its vertices. Then the minimum number of vertices separating $A$ from $B$ in $G$ equals the maximum number of disjoint $A-B$ paths in $G$. 
\end{theorem}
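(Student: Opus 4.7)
The final statement is Menger's theorem, a classical result which the paper merely cites from Diestel. No proof would actually appear in the paper here. Nonetheless, the standard textbook argument I would give proceeds by induction on $|E(G)|$, and I outline it below.

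The easy direction (max $\le$ min) is immediate: any family of disjoint $A$--$B$ paths meets a given $A$--$B$ separator in distinct vertices, so the separator has at least as many vertices as there are paths. So the content is to produce, given that the minimum $A$--$B$ separator has size $k$, a collection of $k$ pairwise disjoint $A$--$B$ paths. After disposing of the trivial base cases (no edges; $A\cap B\ne\emptyset$; $k=0$), pick an edge $e=uv$ with $u\notin B$ and $v\notin A$ and split into two cases according to the minimum $A$--$B$ separator size in $G-e$.

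In the first case, every minimum $A$--$B$ separator of $G-e$ still has size $k$. Then the inductive hypothesis applied to $G-e$ directly yields $k$ disjoint $A$--$B$ paths, which remain valid in $G$. In the second case, $G-e$ admits an $A$--$B$ separator $T$ of size $k-1$; then $S_1:=T\cup\{u\}$ and $S_2:=T\cup\{v\}$ are both $A$--$B$ separators of size exactly $k$ in $G$. Using $S_1$ I cut $G$ into the subgraph $H_A$ consisting of all vertices reachable from $A$ without passing through $S_1$, together with $S_1$ itself; symmetrically $S_2$ yields $H_B$. By the minimality of $k$, induction applies in both $H_A$ (treated as having terminal pair $(A,S_1)$) and $H_B$ (treated as having terminal pair $(S_2,B)$), producing $k$ disjoint $A$--$S_1$ paths in $H_A$ and $k$ disjoint $S_2$--$B$ paths in $H_B$. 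Concatenating through the bijection provided by the edge $e$ (for the pair $u,v$) and through equal endpoints in $T$ (for the remaining $k-1$ vertices of $T$) yields the required $k$ disjoint $A$--$B$ paths in $G$.

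The main obstacle in this argument is purely technical: one must verify that $H_A$ and $H_B$ each have strictly fewer edges than $G$ so that induction is legitimate, and one must choose $e$ so that the degenerate configurations (where $A$ or $B$ already meets one side of the split, making $H_A$ or $H_B$ equal to $G$) do not occur. These edge cases are handled by selecting $e$ with one endpoint outside each of $A$ and $B$, after having already reduced to the case $A\cap B=\emptyset$ and no isolated vertices. Everything else is bookkeeping.
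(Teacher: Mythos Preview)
You are correct that the paper does not prove Menger's theorem; it is stated with a citation to Diestel and used as a black box, so there is nothing to compare against. Your sketch is one of the standard textbook arguments (induction on $|E(G)|$ via edge deletion), and the overall structure is sound.

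One caveat on the technicalities you flag at the end: the choice ``pick $e=uv$ with $u\notin B$ and $v\notin A$'' is not always available in the form stated, and even when it is, showing that both $H_A$ and $H_B$ are strictly smaller than $G$ requires a bit more than selecting such an $e$; the cleaner textbook route is to contract $e$ rather than delete it in one of the two branches, or to argue separately that $v$ lies strictly on the $B$-side of $S_1$ (and $u$ strictly on the $A$-side of $S_2$) using the fact that $T$ does not separate $A$ from $B$ in $G$. Since this is peripheral to the paper, your acknowledgement that these are bookkeeping issues is adequate here.
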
 

\begin{theorem}[Consequence of Menger's theorem]\label{thm:Menger-consequence} Let $G$ be a $k$-connected graph, and let $A$ and $B$ be two subsets of its vertices, each of cardinality at least $k$. Then there are $k$ disjoint $A-B$ paths in $G$. 
\end{theorem}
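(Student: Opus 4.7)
The plan is to reduce the claim directly to Menger's theorem (Theorem~\ref{thm:Menger}). By that theorem, the maximum number of pairwise disjoint $A$-$B$ paths in $G$ equals the minimum cardinality of an $A$-$B$ separator, so it is enough to show that every $A$-$B$ separator has at least $k$ vertices.

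Suppose, for contradiction, that $X \subseteq V(G)$ separates $A$ from $B$ and $|X| \leq k-1$. First I would observe that any vertex $v \in A \cap B$ is itself a trivial (length-zero) $A$-$B$ path and hence must belong to $X$; thus $A \cap B \subseteq X$. Combined with $|A|,|B| \geq k > |X|$, this yields $|A \setminus X| \geq 1$ and $|B \setminus X| \geq 1$, and moreover any $a \in A \setminus X$ and $b \in B \setminus X$ must be distinct, since otherwise $a = b$ would lie in $(A \cap B)\setminus X$, contradicting the previous sentence.

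Now pick such $a$ and $b$. Since $G$ is $k$-connected and $|X| < k$, the subgraph $G - X$ is still connected, so it contains an $a$-$b$ path $L$. Let $a'$ be the last vertex on $L$ (traversing from $a$) that lies in $A$, and let $b'$ be the first vertex on $L$ strictly after $a'$ that lies in $B$; such a $b'$ exists because $b \in B$. The subpath of $L$ from $a'$ to $b'$ meets $A$ only at $a'$ and $B$ only at $b'$, so it is an $A$-$B$ path in the strict sense of the paper's definition, and it avoids $X$. This contradicts $X$ being an $A$-$B$ separator, so every $A$-$B$ separator has cardinality at least $k$, and Menger's theorem delivers the desired $k$ disjoint $A$-$B$ paths.

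I do not expect any real obstacle here; the only delicate point is the trimming step that turns an arbitrary $a$-$b$ walk in $G - X$ into a path satisfying the strict definition of an $A$-$B$ path (endpoints in $A$, $B$ respectively, and no other intersection with $A \cup B$), but this is essentially bookkeeping. Everything else is definition chasing plus a single invocation of $k$-connectivity.
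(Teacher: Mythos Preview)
Your argument is correct: the reduction to Menger's theorem via the bound on separator size, together with the trimming step to respect the paper's strict definition of an $A$--$B$ path, goes through exactly as you describe. Note, however, that the paper does not actually give its own proof of this statement; it is simply recorded as a standard consequence of Menger's theorem (implicitly covered by the citation to \cite[Sec.~3.3]{Die05}), so there is no alternative argument to compare against---your derivation is precisely the textbook one.
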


Two vertex-edge paths are {\it independent} if they share no inner vertex. 

\begin{lemma} Let P be a cubical $d$-polytope with $d\ge 4$. Let $X$ be a set of $d+1$  vertices in $P$, all contained in a facet $F$. Let $k:=\floor{(d+1)/2}$. Arbitrarily label and pair $2k$ vertices in $X$ to obtain $Y:=\{\{s_{1},t_{1}\},\ldots,\{s_{k},t_{k}\}\}$. Then, for at least $k-1$ of these pairs $\{s_{i},t_{i}\}$, there is an  $X$-valid  $s_{i}-t_{i}$ path in $F$.   
\label{lem:short-distance} 
\end{lemma}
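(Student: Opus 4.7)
The plan is to prove the contrapositive-style statement: at most one of the $k$ pairs $\{s_i,t_i\}$ fails to admit an $X$-valid $s_i$-$t_i$ path in $F$. Since $P$ is cubical and $F$ is a facet, $F$ is a $(d-1)$-cube, and by \cref{thm:Balinski} the graph $G(F)$ is $(d-1)$-connected. For each pair I set $Z_i:=X\setminus\{s_i,t_i\}$, a set of cardinality exactly $d-1$. The key observation is that an $X$-valid $s_i$-$t_i$ path in $F$ exists if and only if $Z_i$ does not separate $s_i$ from $t_i$ in $G(F)$.

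The first step is a structural description of the bad pairs, i.e.\ those for which $Z_i$ does separate $s_i$ from $t_i$. Because any $s_i$-$t_i$ separator in $F$ has size at least $d-1$ by the $(d-1)$-connectivity of $F$, and $|Z_i|=d-1$, the set $Z_i$ itself must be a minimum separator of $G(F)$. Applying \cref{prop:cube-cutsets} to $F$, viewed as $Q_{d-1}$, therefore yields $Z_i=N_F(v)$ for some $v\in V(F)$. I then plan to verify that $F-N_F(v)$ consists of the isolated vertex $v$ together with a connected subgraph on $V(F)\setminus N_F[v]$; this is handled by routing every vertex $w$ at Hamming distance at least $2$ from $v$ to the antipode $v^o$ by flipping, one at a time, the coordinates on which $w$ agrees with $v$, which keeps the walk at distance at least $2$ from $v$ throughout (this uses $d-1\ge 3$). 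Consequently, $Z_i$ separates $s_i$ from $t_i$ precisely when $v\in\{s_i,t_i\}$; relabelling so that $v=s_i$, the pair is bad if and only if $N_F(s_i)=X\setminus\{s_i,t_i\}$.

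The second step is a short bipartite argument. Suppose for contradiction that two distinct pairs $\{v_1,t_1\}$ and $\{v_2,t_2\}$ are both bad, with $N_F(v_j)=X\setminus\{v_j,t_j\}$ for $j=1,2$. The four vertices $v_1,t_1,v_2,t_2$ are pairwise distinct, so $v_2\in X\setminus\{v_1,t_1\}=N_F(v_1)$, meaning $v_1v_2$ is an edge of $F$. Since the cube is bipartite, adjacent vertices have disjoint neighbourhoods, giving $N_F(v_1)\cap N_F(v_2)=\emptyset$. On the other hand,
\[
N_F(v_1)\cap N_F(v_2)=(X\setminus\{v_1,t_1\})\cap(X\setminus\{v_2,t_2\})=X\setminus\{v_1,t_1,v_2,t_2\}
\]
has cardinality $(d+1)-4=d-3$, forcing $d=3$ and contradicting $d\ge 4$.

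The main technical point will be confirming that $F-N_F[v]$ is connected for $d-1\ge 3$; once that is in place, the characterisation of bad pairs via \cref{prop:cube-cutsets} together with the bipartiteness of the cube delivers the final contradiction almost immediately.
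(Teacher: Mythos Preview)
Your argument is correct and follows essentially the same approach as the paper's proof: both use the $(d-1)$-connectivity of $F$ together with \cref{prop:cube-cutsets} to show that a bad pair forces $X\setminus\{s_i,t_i\}=N_F(s_i)$ (or $N_F(t_i)$), and then obtain a contradiction from two bad pairs via the triangle-freeness of the cube. Your formulation via bipartiteness (adjacent vertices have disjoint neighbourhoods) is equivalent to the paper's ``a fifth vertex would create a triangle with $s_1$ and $s_2$''; the only substantive difference is that you spell out explicitly why the centre $v$ of the separator must equal $s_i$ or $t_i$ (by verifying that $F-N_F[v]$ is connected), whereas the paper asserts this step directly.
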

 
\begin{proof} If, for each pair in $Y$ there is an  $X$-valid path in $F$ connecting the pair, we are done. So assume there is a pair in $Y$, say $\{s_{1},t_{1}\}$, for which an $X$-valid $s_{1}-t_{1}$ path does not exist in $F$. Since $F$ is $(d-1)$-connected, there are $d-1$ independent $s_{1}-t_{1}$ paths (\cref{thm:Menger-consequence}), each containing a vertex from $X\setminus \{s_{1},t_{1}\}$; that is, the set $X\setminus \{s_{1},t_{1}\}$, with cardinality $d-1$, separates $s_{1}$ from $t_{1}$ in $F$. By \cref{prop:cube-cutsets}, the vertices in $X\setminus \{s_{1},t_{1}\}$ are the neighbours of $s_{1}$ or $t_{1}$ in $F$, say of $s_{1}$. 

Take any pair in $Y \setminus \{\{s_{1},t_{1}\}\}$, say $\{s_{2},t_{2}\}$. Observe that $s_2$ and $t_2$ are both neighbours of $s_1$. If there was no $X$-valid $s_{2}-t_{2}$ path in $F$, then,  by \cref{prop:cube-cutsets}, the set $X\setminus \{s_{2},t_{2}\}$ would separate $s_{2}$ from $t_{2}$ and would consist of the neighbours of $s_{2}$ or $t_{2}$ in $F$, say of $s_{2}$. But in this case, a vertex $x$ in $X\setminus \{s_{1},s_{2},t_{1},t_{2}\}$, which exists since $|X|\ge 5$, would form a triangle with $s_{1}$ and $s_{2}$, a contradiction. See also \cref{cor:separator-independent}. Since our choice of $\{s_{2},t_{2}\}$ was arbitrary, we must have an $X$-valid path in $F$ between any pair $\{s_{i},t_{i}\}$ for $i\in [2,k]$.          
\end{proof}

 For a set $Y:=\{\{s_{1},t_{1}\},\ldots,\{s_{k},t_{k}\}\}$ of pairs of vertices in a graph, a {\it $Y$-linkage} $\{L_{1},\ldots,L_{k}\}$ is a set of disjoint paths with the path $L_{i}$ joining the pair $\{s_{i},t_{i}\}$ for $i\in [1,k]$.  For a path $L:=u_{0}\ldots u_{n}$ we often write $u_{i}Lu_{j}$ for $0\le i\le j\le n$  to denote the subpath $u_{i}\ldots u_{j}$. We are now ready to prove \cref{thm:cube}.

The definition of $k$-linkedness gives the following lemma at once.

\begin{lemma}\label{lem:k-linked-def} Let $\ell\le k$. Let $X^*$ be a set of $2\ell$ distinct vertices of a $k$-linked graph $G$,  let $Y^*$ be a labelling and pairing of the vertices in $X^*$, and let $Z^*$ be a set of at most $2k-2\ell$ vertices in $G$ such that $X^*\cap Z^*=\emptyset$. Then there exists a $Y^*$-linkage in $G$ that avoids every vertex in $Z^*$.  
\end{lemma}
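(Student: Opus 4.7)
The plan is to reduce the claim to a direct application of $k$-linkedness, by incorporating the forbidden set $Z^*$ (together with possibly some padding vertices) into the pairing. Since any $Y'$-linkage in a $k$-linked graph consists of pairwise vertex-disjoint paths, if we can arrange for the elements of $Z^*$ to be \emph{endpoints} of paths other than the ones joining the pairs of $Y^*$, then the $Y^*$-paths will automatically avoid $Z^*$.

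Concretely, I would first set $m:=2k-2\ell-|Z^*|\ge 0$. Because $G$ is $k$-linked we have $|V(G)|\ge 2k$, and by hypothesis $|X^*\cup Z^*|=2\ell+|Z^*|\le 2k$, so the set $V(G)\setminus(X^*\cup Z^*)$ has at least $m$ vertices. Choose any $m$ such vertices and append them to $Z^*$ to obtain a set $Z'$ with $Z^*\subseteq Z'\subseteq V(G)\setminus X^*$ and $|Z'|=2k-2\ell$. Now pair the vertices of $Z'$ arbitrarily into $k-\ell$ pairs $\{\{s_{\ell+1},t_{\ell+1}\},\ldots,\{s_{k},t_{k}\}\}$, and let
\[
Y':=Y^*\cup\bigl\{\{s_{\ell+1},t_{\ell+1}\},\ldots,\{s_{k},t_{k}\}\bigr\}.
\]
Then $Y'$ is a labelling and pairing of $2k$ distinct vertices of $G$.

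Applying $k$-linkedness to $Y'$ yields a $Y'$-linkage $\{L_{1},\ldots,L_{k}\}$. The subcollection $\{L_{1},\ldots,L_{\ell}\}$ is a $Y^*$-linkage, and since the paths of a linkage are pairwise vertex-disjoint, for each $i\in[1,\ell]$ the path $L_{i}$ is disjoint from each $L_{j}$ with $j\in[\ell+1,k]$, in particular from the endpoints of those paths. Hence no vertex of $Z'$, and therefore no vertex of $Z^*$, appears on $L_{i}$. This gives the desired $Y^*$-linkage avoiding $Z^*$.

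There is no real obstacle here; the only thing to watch is the counting that guarantees enough vertices outside $X^*\cup Z^*$ to pad $Z^*$ up to $2k-2\ell$ terminals, which is exactly where the hypotheses $|V(G)|\ge 2k$ (from $k$-linkedness) and $|Z^*|\le 2k-2\ell$ are used.
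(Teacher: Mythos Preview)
Your argument is correct and is precisely the standard unfolding of the sentence ``the definition of $k$-linkedness gives the following lemma at once,'' which is all the paper says by way of proof. The padding step and the counting $|V(G)\setminus(X^*\cup Z^*)|\ge 2k-(2\ell+|Z^*|)=m$ are exactly right.
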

 
 We require a result on strong linkedness. With \cref{prop:4polytopes,lem:short-distance} at hand, we can verify  that cubical 4-polytopes are strongly $2$-linked. 
   
\begin{theorem}[Strong linkedness of cubical 4-polytopes]\label{thm:4polytopes-strong-linkedness}   Every cubical 4-polytope is strongly $2$-linked.
\end{theorem}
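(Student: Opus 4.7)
The plan is to adapt the hyperplane-based proof of \cref{prop:4polytopes} to the stronger setting where a distinguished vertex $v$ must also be avoided. Let $P$ be a cubical 4-polytope, $v\in V(P)$, and $X=\{s_1,t_1,s_2,t_2\}\subseteq V(P)\setminus\{v\}$ with pairing $Y=\{\{s_1,t_1\},\{s_2,t_2\}\}$. The goal is to construct a $Y$-linkage in $G(P)-v$.

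Whenever the affine hull of $X\cup\{v\}$ has dimension at most three, I would pick a linear function $f$ whose zero hyperplane $H$ contains $X\cup\{v\}$, so that $f(v)=0$. If $H$ cuts the interior of $P$, two applications of \cref{lem:linear-paths} produce an $s_1$-$t_1$ path with strictly positive inner $f$-values and an $s_2$-$t_2$ path with strictly negative inner $f$-values, both automatically missing $v$. If instead $H$ supports a facet $F$, then $X\cup\{v\}\subseteq F$ and $F$ is a 3-cube with five distinguished vertices. I would invoke \cref{lem:short-distance} using the pairing $Y$ and treating $v$ as the unpaired fifth obstacle; this yields, for at least one pair (say $\{s_1,t_1\}$), an $(X\cup\{v\})$-valid path $L_1$ inside $F$. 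Then \cref{lem:linear-paths} supplies an $s_2$-$t_2$ path $L_2$ with positive inner $f$-values that lies outside $F$ except at its endpoints, hence is disjoint from both $L_1$ and $v$.

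The residual case is when $X\cup\{v\}$ is affinely independent, so the unique hyperplane $H$ spanned by $X$ leaves $v$ strictly to one side; say $f(v)>0$. Here the $s_1$-$t_1$ path with negative inner $f$-values (via \cref{lem:linear-paths}) automatically avoids $v$, but the complementary $s_2$-$t_2$ path on the positive side may run through $v$. The main obstacle is forcing this second path to miss $v$ without losing disjointness from the first. To handle it I would combine the $4$-connectivity of $G(P)$ from \cref{thm:Balinski} with Menger's theorem (\cref{thm:Menger-consequence}) to extract several internally disjoint $s_2$-$t_2$ paths in $G(P)-v$ and pick one avoiding the first path; when $H$ in addition supports a facet $F$ containing $X$ but not $v$, \cref{lem:short-distance} will afford enough freedom to keep $L_1$ inside $F$ while routing $L_2$ through the complementary side away from $v$. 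The delicate point throughout is that the residual positive-side construction must always be able to avoid a single prescribed vertex, and this is exactly where the cubical (and facetwise 3-cube) structure, rather than the bare linear-function machinery of \cref{prop:4polytopes}, has to be brought in.
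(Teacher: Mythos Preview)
Your first case (the five points lie on a common hyperplane) is correct and matches the paper's logic. The genuine gap is in your residual case, where $X\cup\{v\}$ is affinely independent and you take $H=\aff(X)$ with $f(v)>0$.

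First, the Menger argument you sketch does not close the gap. From $G(P)-v$ you get only three internally disjoint $s_2$--$t_2$ paths (since $G(P)$ is $4$-connected, hence $G(P)-v$ is $3$-connected), and the already-constructed path $L_1$ can be long enough to meet all three; nothing prevents that. Second, in the sub-case where your $H$ supports a facet $F$ containing $X$ but not $v$, there is no ``complementary side away from $v$'': the polytope lies entirely in the closed half-space $\{f\ge 0\}$, and $v$ sits in the open part $\{f>0\}$, so any path for the second pair produced by \cref{lem:linear-paths} lives exactly where $v$ does. Moreover \cref{lem:short-distance} is stated for $d+1$ vertices in the facet, i.e.\ five vertices in a $3$-cube, so with only the four terminals in $F$ it is not directly applicable (three-connectivity of $F$ would give you one $X$-valid path, but that does not help with the second path avoiding $v$).

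The fix is a one-line change of hyperplane that eliminates the residual case entirely: instead of passing $H$ through all four terminals, pass it through $\{s_1,s_2,t_1,v\}$ and let $t_2$ be the free point with $f(t_2)\ge 0$. Four points in $\R^4$ always lie on a hyperplane, so no affine-independence dichotomy is needed, and now $f(v)=0$ in every case, so any path with strictly positive or strictly negative inner $f$-values misses $v$ for free. If $H$ supports a facet $F$ and $t_2\notin F$, three-connectivity of $F$ gives an $(X\cup\{v\})$-valid $s_1$--$t_1$ path in $F$; if $t_2\in F$ then all five points lie in $F$ and \cref{lem:short-distance} applies exactly as you used it in your first case. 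This is precisely how the paper proceeds.
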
 
\begin{proof}   

Let $G$ denote the graph of a cubical 4-polytope $P$ embedded in $\mathbb R^{4}$. Let $X$ be a set of five vertices in $G$.  Arbitrarily pair four vertices of $X$ to obtain $Y:=\{\{s_{1},t_{1}\},\{s_{2},t_{2}\}\}$. Let $x$ be the vertex of $X$ not being paired in $Y$. We aim to find two disjoint paths $L_{1}:=s_{1}-t_{1}$ and  $L_{2}:=s_{2}-t_{2}$ such that each path $L_{i}$ avoids the vertex $x$. The proof is very similar to that of \cref{prop:3-polytopes,prop:4polytopes}. 
 
Consider a linear function $f$ that vanishes on a linear hyperplane $H$ passing through $\{s_{1},s_{2},t_{1},x\}$.  Assume that $f(y)>0$  for some $y\in P$ and that $f(t_{2})\ge 0$. 

Suppose first that $H$ is  a supporting hyperplane of a facet $F$ of $P$. If $t_{2}\not\in V(F)$ (that is, $f(t_2)>0$), then find an $X$-valid $L_{1}:=s_{1}-t_{1}$ path in $F$ using the 3-connectivity of $F$ (Balinski's theorem). Then use \cref{lem:linear-paths}  to find an $X$-valid $s_{2}-t_{2}$ path in which each inner vertex has positive $f$-value. If instead  $t_{2}\in F$, then $X\subset V(F)$ and \cref{lem:short-distance} ensures the existence of an $X$-valid $s_{i}-t_{i}$ path in $F$ for some $i=1,2$, say for $i=1$. Then use \cref{lem:linear-paths} to find an $X$-valid $s_{2}-t_{2}$ path in which each inner vertex has positive $f$-value. 

So assume that $H$ intersects the interior of $P$. Then there is a vertex in $P$ with $f$-value greater than zero and a vertex with $f$-value less than zero. In this case, use \cref{lem:linear-paths} to find  an $X$-valid $s_{1}-t_{1}$ path in which each inner vertex has negative $f$-value and an $X$-valid $s_{2}-t_{2}$ path in which each inner vertex has positive $f$-value. 
\end{proof}

 Not every 4-polytope is strongly $2$-linked. Take a two-fold pyramid $P$ over a quadrangle $Q$. Then $P$ is a 4-polytope on six vertices, say $s_{1},s_{2},t_{1},t_{2},x,y$. Let the sequence $s_{1},s_{2},t_{1},t_{2}$ appears in $Q$ in cyclic order, and let the vertex $x$ be in $V(P)\setminus V(Q)$. To see that $P$ is not strongly 2-linked, observe that, for every two paths $s_{1}-t_{1}$ and $s_{2}-t_{2}$ in $P$, they intersect or one of them contains $x$.   
 
 We continue with a simple lemma from \cite[Sec.~3]{WerWot11}.
 
\begin{lemma}[{\cite[Sec.~3]{WerWot11}}]\label{lem:k-linked-subgraph} Let $G$ be a $2k$-connected graph and let $G'$ be a $k$-linked subgraph of $G$. Then $G$ is $k$-linked.
\end{lemma}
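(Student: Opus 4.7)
The plan is to route the $2k$ prescribed terminals of $G$ into the $k$-linked subgraph $G'$, solve the linkage problem inside $G'$, and concatenate. Let me explain the steps.

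Let $X:=\{s_1,\ldots,s_k,t_1,\ldots,t_k\}$ be any set of $2k$ terminals in $G$ with prescribed pairing $Y:=\{\{s_i,t_i\}:i\in[1,k]\}$. Since $G'$ is $k$-linked, we have $|V(G')|\ge 2k$. As $G$ is $2k$-connected and both $X$ and $V(G')$ have cardinality at least $2k$, the consequence of Menger's theorem (\cref{thm:Menger-consequence}) supplies $2k$ disjoint $X$--$V(G')$ paths in $G$. By the definition of $A$--$B$ paths adopted in the paper, each such path $R_x$ starts at some $x\in X$, ends at a vertex $x'\in V(G')$, and has all inner vertices in $V(G)\setminus(X\cup V(G'))$; note that when $x$ already lies in $V(G')$ the corresponding ``path'' is the trivial one, i.e.\ $x'=x$.

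Since the $2k$ paths $\{R_x:x\in X\}$ are pairwise disjoint, the endpoints $\{x':x\in X\}$ are $2k$ distinct vertices of $G'$. Transport the pairing $Y$ along this bijection to obtain $Y':=\{\{s'_i,t'_i\}:i\in[1,k]\}$, a labelling and pairing of $2k$ vertices of $V(G')$. Apply the $k$-linkedness of $G'$ to obtain a $Y'$-linkage $\{L'_1,\ldots,L'_k\}$ of pairwise disjoint $s'_i$--$t'_i$ paths lying entirely in $G'$.

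Finally, for each $i\in[1,k]$ concatenate $L_i:=R_{s_i}\cup L'_i\cup R_{t_i}$. Disjointness holds because: the paths $R_x$ are disjoint from each other; their inner vertices avoid $V(G')$ and hence every $L'_j$; and the $L'_j$ are disjoint inside $G'$, with their only shared vertices with the $R_x$ occurring at the matched endpoints $x'$, which are precisely the attachment points. Thus $\{L_1,\ldots,L_k\}$ is a $Y$-linkage in $G$. Since $X$ and $Y$ were arbitrary, $G$ is $k$-linked.

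There is no real obstacle here; the only point that needs care is the adopted convention that $A$--$B$ paths have their inner vertices outside $A\cup B$, which is exactly what guarantees that the rerouting paths $R_x$ do not collide with the internal linkage $\{L'_i\}$ at any inner vertex.
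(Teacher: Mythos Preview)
Your proof is correct and is precisely the standard argument for this well-known fact: route the terminals into the $k$-linked subgraph via $2k$ disjoint $X$--$V(G')$ paths (Menger), link inside $G'$, and concatenate. The paper itself does not give a proof of this lemma---it merely cites \cite[Sec.~3]{WerWot11}---so there is nothing to compare against beyond noting that your argument matches the intended one.
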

 
 We are now ready to establish the linkedness of the $d$-cube.
 
\begin{theorem}[{Linkedness of the cube}]\label{thm:cube}  For every $d\ne 3$, a $d$-cube is $\floor{(d+1)/2}$-linked.
\end{theorem}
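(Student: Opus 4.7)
The proof proceeds by induction on $d$, with base cases $d\in\{1,2,4\}$: the graphs $Q_1=K_2$ and $Q_2=C_4$ are $1$-linked trivially, while $Q_4$ is $2$-linked by \cref{prop:4polytopes}. For the inductive step with $d\ge 5$, I would split on the parity of $d$. If $d$ is even, set $k=d/2$, so $2k=d$. \cref{thm:Balinski} ensures that $Q_d$ is $d$-connected, and any facet $F\cong Q_{d-1}$ is $\floor{d/2}=k$-linked by the inductive hypothesis applied to the odd dimension $d-1$. An appeal to \cref{lem:k-linked-subgraph} then gives that $Q_d$ is $k$-linked.

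The odd case $d\ge 5$ is substantive. Here $k=(d+1)/2$ and $2k=d+1>d$, so Balinski gives only $d$-connectivity and \cref{lem:k-linked-subgraph} is not directly applicable; moreover the inductive hypothesis at dimension $d-1$ only yields that $Q_{d-1}$ is $(k-1)$-linked. Given $X=\{s_1,t_1,\ldots,s_k,t_k\}$ with pairing $Y$, the plan is to pick a pair $\{F,F^o\}$ of opposite facets and combine paths inside $F$ and $F^o$ (through the IH on $Q_{d-1}$) with edges across the pair supplied by the projection $\pi$. The first situation to dispatch is that some facet $F$ contains all of $X$: then \cref{lem:short-distance} furnishes $X$-valid paths inside $F$ linking $k-1$ of the pairs in $Y$, and the remaining pair $\{s_i,t_i\}$ is routed as $s_i\,\pi(s_i)\,P\,\pi(t_i)\,t_i$, where $P$ is any $\pi(s_i)$--$\pi(t_i)$ path in the untouched $F^o$ (which is connected by Balinski).

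When no facet contains all of $X$, I would use \cref{lem:facets-association} to locate a pair $\{F,F^o\}$ of opposite facets not associated with $X$: since $|X|-1=d$ equals the total number of pairs of opposite facets, either such a non-associated pair exists, or \cref{lem:facets-association} is tight. Given a non-associated pair, decompose $Y$ into pairs lying in $F$, pairs lying in $F^o$, and pairs split between them; each type is then handled by the IH inside the appropriate facet (at most $k-1$ pairs per side) together with projection edges for the split pairs, and one verifies that the resulting paths remain disjoint.

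The main obstacle is the extremal case in which every pair of opposite facets is associated with $X$. Inspecting the proof of \cref{lem:facets-association} shows that this forces $G[X]$ to contain a spanning tree using exactly one edge from each of the $d$ directions---a rigid combinatorial configuration. No non-associated pair is then available, so a direct argument exploiting this rigidity is required: I would select the pair $\{F,F^o\}$ giving the most favourable split of $X$, use the single associating edge in that direction to relocate one endpoint via $\pi$, and then proceed as in the previous case. Managing the interaction between the in-facet paths and the projection edges, and confirming disjointness throughout, is expected to be the delicate technical step.
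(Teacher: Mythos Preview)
Your outline tracks the paper through the base cases and the even-dimensional step, but the odd case $d\ge 5$ has two genuine gaps.

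First, when all of $X$ lies in a facet $F$, you misread \cref{lem:short-distance}. The lemma asserts that for each of $k-1$ pairs there \emph{exists} an $X$-valid path in $F$, but it says nothing about these paths being pairwise disjoint: ``$X$-valid'' only forbids inner vertices from lying in $X$, not from lying on one another. Nor can the inductive hypothesis on $F\cong Q_{d-1}$ rescue this, since $F$ is only $(k-1)$-linked and you would need those $k-1$ paths additionally to avoid the two terminals of the leftover pair---impossible by \cref{lem:k-linked-def} with $\ell=k-1$. The paper reverses the roles of $F$ and $F^{o}$: it uses \cref{lem:short-distance} to extract \emph{one} $X$-valid path $L_{1}$ in $F$, projects the remaining $k-1$ pairs to $F^{o}$ (which is terminal-free), and links them there via the $(k-1)$-linkedness of $F^{o}$.

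Second, your case split in the remaining situation drives you into an extremal configuration you do not resolve. Since $|X|=d+1$, \cref{lem:facets-association} bounds the number of associated pairs only by $d$, which equals the total number of opposite-facet pairs; so every pair $\{F,F^{o}\}$ may well be associated with $X$, and your suggestion to ``use the single associating edge to relocate one endpoint'' is not an argument---several terminals may project onto other terminals simultaneously, and disjointness of the resulting paths is nowhere verified. The paper avoids this trap by a different trichotomy: it first isolates the case in which \emph{every} pair in $Y$ lies at distance $d$ (then applying \cref{lem:facets-association} to $X\setminus\{s_{1}\}$, a set of only $d$ elements, guarantees a non-associated pair), and in the remaining generic case it fixes a facet $F$ containing some pair $\{s_{1},t_{1}\}$ and builds an explicit injective detour map $\omega:X_{\beta}\to V(F)$ whose disjointness is certified via \cref{prop:cube-cutsets}. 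That construction, not a counting shortcut, is where the real work lies.
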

\begin{proof} The cases of $d=1,2$ are trivially true. For the remaining values of $d$, we proceed by induction, with the base $d=4$ given by \cref{prop:4polytopes}. So assume that $d\ge 5$.
 
Let $k:=\floor{(d+1)/2}$,  then $2k-1\le d$.  Let $X$ be any set of $2k$ vertices, our terminals, in the graph $G$ of the  $d$-cube $Q_{d}$ and let $Y:=\{\{s_{1},t_{1}\},\ldots,\{s_{k},t_{k}\}\}$ be a pairing and labelling of the vertices of $X$. We aim to find a $Y$-linkage $\{L_{1},\ldots,L_{k}\}$ with $L_{i}$ joining the pair $\{s_{i},t_{i}\}$ for $i=1,\ldots,k$. 

We first deal with the case of even $d\ge 6$. In this setting, $d=2k$, and so $G$ is $2k$-connected by Balinski's theorem. Furthermore, by the induction hypothesis, the graph $G'$ of every facet of $Q_d$, a $(d-1)$-polytope, is $\floor{d/2}$-linked, namely $k$-linked. \cref{lem:k-linked-subgraph} now ensures that $G$ is $k$-linked. As a consequence, for the rest of the proof, we focus on the case of odd $d\ge 5$.

For a facet $F$ of $Q_{d}$, let $F^{o}$ denote the facet opposite to $F$. 
\begin{figure}     
\includegraphics{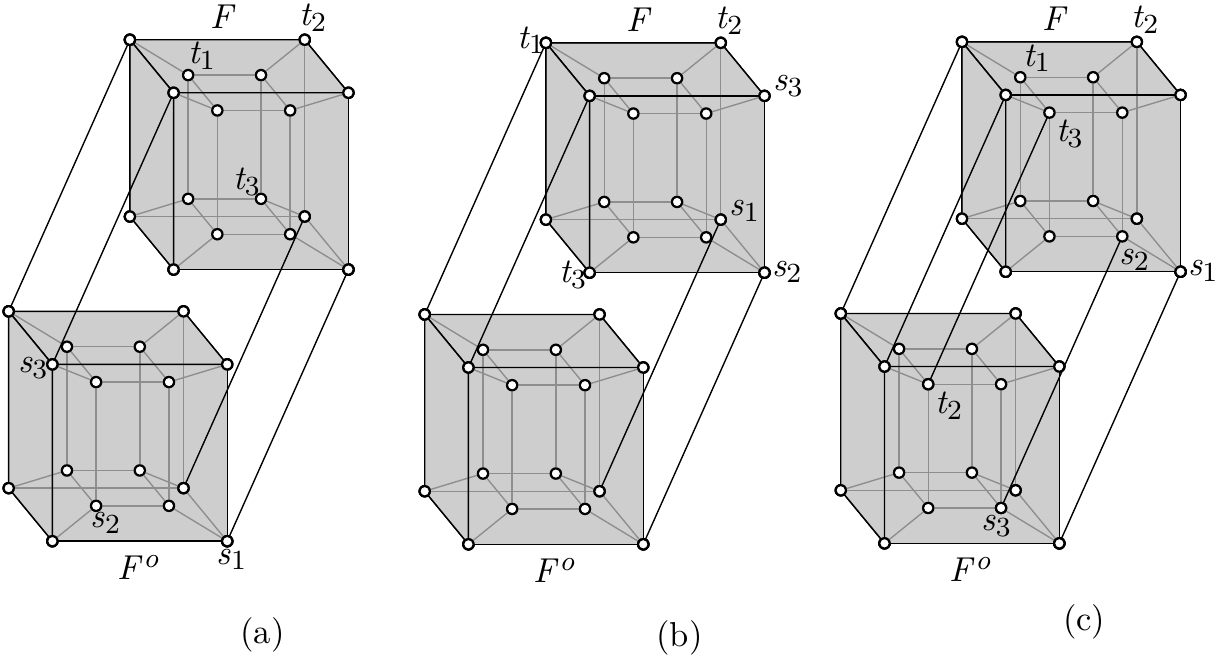}
\caption{The three scenarios of \cref{thm:cube}, exemplified in $Q_5$. Two opposite facets $\{F, F^{o}\}$ of $Q_5$ are highlighted in grey. There is an edge between each vertex in $F$ and the ``copy" of the vertex in $F^{o}$; while some edges from $F$ to $F^{o}$  are missing, we have depicted enough edges to show how the other edges should be drawn.   (a) First scenario: every pair in $Y$ is at distance $5$. (b) Second scenario: every vertex in $X$ lies in the facet $F$ of $Q_5$. (c) Third scenario: not every pair in $Y$ is at distance $5$, namely $s_1$ and $t_1$, and not every vertex in $X$ lies in some facet  of $Q_5$, in this case $\dist_{Q_5}(s_1,t_2)=5$ and so no facet can contain both of them (\cref{rmk:cube-1}).}\label{fig:5cube-scenarios} 
\end{figure}

%We consider three scenarios: (1)  all the pairs in $Y$ lie in some facet of $Q_{d}$, (2)  a pair  of $Y$ lies in some facet $F$ of $Q_{d}$ but not every vertex of $X$ is in $F$, and (3) no pair of $Y$ lies in a facet of $Q_{d}$, which amounts to saying that every pair in $Y$ is at distance $d$ in $Q_{d}$. For the sake of readability, each scenario is highlighted in bold.

We consider three scenarios: (1) every pair in $Y$ lies at distance $d$ in $Q_{d}$; (2) all vertices in $X$ lie in a facet $F$ of $Q_{d}$; and (3) the remaining case: not every pair in $Y$ lies at distance $d$ in $Q_{d}$ and not all vertices in $X$ lie in the same facet of $Q_d$.  The three scenarios are depicted in \cref{fig:5cube-scenarios}. It is helpful to have the following remark  at hand.
\begin{remark}
\label{rmk:cube-1}
Two vertices $x$ and $y$ lie in some facet of $Q_d$ if and only if $\dist (x,y)<d$.
\end{remark}

{\bf In the first scenario every pair in $Y$ lies at distance $d$.} From \cref{lem:facets-association} it follows that there exists a pair $\{F,F^o\}$ of opposite facets of $Q_{d}$ that is not associated with $X_{s_{1}}:=X\setminus \{s_{1}\}$, since $|X\setminus \{s_{1}\}|\le d$ and there are $d$  pairs of the form $\{F,F^o\}$. This means that for every $x\in X_{s_{1}}\cap V(F)$, $\pi_{F^o}(x)\not \in X_{s_{1}}$ and that for every $x\in X_{s_{1}}\cap V(F^o)$, $\pi_{F}(x)\not \in X_{s_{1}}$.  Because every vertex in $G$ is in either $F$ or $F^o$, we must have that, if some $s_i$ is in one of $\{F,F^o\}$, then $t_i$ must be in the other (\cref{rmk:cube-1}). Thus, without loss of generality, we can assume that $s_{1},\ldots,s_{k}\in F^o$ and $t_{1},\ldots,t_{k}\in F$. 

From $s_1\not \in F$  it now follows that $\pi_{F}(s_i)\not \in X$, for each $i\in [2,k]$. Besides, since the pair $\{F,F^o\}$ is not associated with $X_{s_{1}}$, we have that  \[\pi_{F^o}(t_1)\not \in X_{s_1},   \pi_{F^o}(t_2)\not \in X_{s_1},     \ldots, \pi_{F^o}(t_k)\not \in X_{s_1}.\]
It is the case that $\pi_{F^o}(t_1)\ne s_1$, otherwise  $s_1$ and $t_1$ would be adjacent, contradicting the fact that $\dist(s_1,t_1)=d\ge 5$. Hence   $\pi_{F^o}(t_1)\not \in X$. 
Because $k>2$, it is also true  that $\pi_{F^{o}}(t_{i})\ne s_{1}$ for some $t_{i}\in V(F)$ with $i\in[2,k]$, say $\pi_{F^{o}}(t_{2})\ne s_{1}$. Then  $\pi_{F^{o}}(t_{2})\not\in X$. We summarise our discussion below.
\begin{align}
\label{eq:cube-2}
      \pi_{F^o}(t_1)\not \in X,\;    \pi_{F^o}(t_2)\not \in X,\;     \pi_{F^o}(t_3)\not \in X_{s_1},\ldots, \pi_{F^o}(t_k)\not \in X_{s_1},   \\
          \pi_{F}(s_2)\not \in X,\;    \pi_{F}(s_3)\not \in X,    \ldots, \pi_{F}(s_k)\not \in X.\nonumber
\end{align}

Let $X':=\{\pi_{F}(s_{3}),\ldots,\pi_{F}(s_{k}),t_{3},\ldots,t_{k}\}$. By the induction hypothesis, $F$ is $(k-1)$-linked. In the notation of \cref{lem:k-linked-def}, if we let $\ell:=k-2$, $X^*:=X'$, $Y^*:=\{\{\pi_{F}(s_{3}),t_3\},\ldots,\{\pi_{F}(s_{k}),t_{k}\}\}$, and $Z^*:=\left\{t_{1},t_{2}\right\}$, then we can find $k-2$ disjoint  paths $L_{i}'$ in $F$ between $\pi_{F}(s_{i})$ and $t_{i}$  for $i\in [3,k]$, with each path avoiding $\left\{t_{1},t_{2}\right\}$; here $|Z^*|=2(k-1)-2\ell=2$ (\cref{lem:k-linked-def}). Let $L_{i}:=s_{i}\pi_{F}(s_{i})L_{i}'t_{i}$, for each $i\in [3,k]$. 

Now we find the paths $L_1$ and $L_2$ in $F^o$. The induction hypothesis yields that $F^{o}$ is $(k-1)$-linked. We also have that  $\pi_{F^o}(t_1)\not \in X$ and $\pi_{F^o}(t_2)\not \in X$, according to \eqref{eq:cube-2}. In the notation of \cref{lem:k-linked-def}, we let $\ell:=2$, $X^*:=\{s_1,\pi_{F^o}(t_1),s_2,\pi_{F^o}(t_2)\}$, $Y^*:=\{(s_{1},\pi_{F^o}(t_1)),(s_{2},\pi_{F^o}(t_2))\}$, and $Z^*:=\left\{s_{3},\ldots,s_{k}\right\}$. Then, according to \cref{lem:k-linked-def}, for $k\ge 4$ we can find disjoint paths $L_{1}':=s_{1}-\pi_{F^{o}}(t_{1})$ and $L_{2}':=s_{2}-\pi_{F^{o}}(t_{2})$ in $F^{o}$, each avoiding the set $\left\{s_{3},\ldots,s_{k}\right\}$, since $|Z^*|=k-2\le 2(k-1)-2 \ell=2k-6$ for $k\ge 4$.

The case $k=3$ requires special attention. In this setting, $Z^*=\{s_3\}$, $d=5$, and $\dim F^o=4$.  Since every cubical 4-polytope is strongly 2-linked by \cref{thm:4polytopes-strong-linkedness},  $F^o$ is strongly 2-linked. Hence, the strong 2-linkedness of $F^o$ now gives the existence of disjoint paths $L_{1}':=s_{1}-\pi_{F^{o}}(t_{1})$ and $L_{2}':=s_{2}-\pi_{F^{o}}(t_{2})$ in $F^{o}$, each  avoiding $Z^*$. 

As a consequence, we let $L_1:=s_1L_1'\pi_{F^o}(t_1)t_1$ and $L_2:=s_2L_2'\pi_{F^o}(t_2)t_2$. In this way, we have found a $Y$-linkage $\{L_{1},\ldots,L_{k}\}$ with $L_{i}$ joining the pair $\{s_{i},t_{i}\}$ for $i=1,\ldots,k$. This completes the proof of the first scenario.

{\bf In the second scenario all vertices in $X$ lie in a facet $F$ of $Q_{d}$}. In this case, \cref{lem:short-distance} gives an $X$-valid path $L_{1}$ in $F$ joining a pair in $Y$, say $\{s_{1},t_{1}\}$. The projection in $Q_{d}$ of every vertex in $(X\setminus \{s_{1},t_{1}\})\cap V(F)$ onto $F^o$ is not in $X$. Define $Y^{p}:=\{\{s_{2}^{p},t_{2}^{p}\},\ldots,\{s^{p}_{k},t^{p}_{k}\}\}$ as the set of $k-1$ pairs of projections of the corresponding vertices in $Y\setminus\left\{\{s_{1},t_{1}\}\right\}$ onto $F^{o}$. By the induction hypothesis on $F^o$, $F^o$ is $(k-1)$-linked, and so there is a $Y^{p}$-linkage $\{L_{2}^{p},\ldots, L_{k}^{p}\}$ with $L_{i}^{p}:=s_{i}^{p}-t_{i}^{p}$ for $i\in [2,k]$. Each path $L_{i}^{p}$ can be extended with $s_{i}$ and $t_{i}$ to obtain a path $L_{i}:=s_{i}-t_{i}$ for $i\in [2,k]$. And together, all the paths $\{L_{1},\ldots, L_{k}\}$ give the desired $Y$-linkage in the cube. 

{\bf Finally, let us move to the third scenario: not every pair in $Y$ lies at distance $d$ in $Q_{d}$ and not all vertices in $X$ lie in the same facet.} It follows that some pair  in $Y$, say $\{s_{1},t_{1}\}$,  lies in some facet $F$ of $Q_{d}$  (\cref{rmk:cube-1}), but not every vertex in $X$ is in $F$. Let $N_{K}(x)$ denote the set of neighbours of a vertex $x$ in a face $K$ of the cube and let $N(x)$ denote the set of all the neighbours of $x$ in the cube. 

We define a function $\rho:X\to X$  that maps $x\in X$ to the terminal with which it is paired in $Y$: $\{x,\rho(x)\}\in Y$.  Let $X_F := (X\setminus\{s_1,t_1\})\cap V(F)$.  We define the following sets.
\begin{align*}
    Y_\alpha&:= \{\{x,\rho(x)\}\in Y: x\in X_F,\,\rho(x)\in N_F(x)\},\\
    X_\alpha& := \{x\in X_{F}: \{x,\rho(x)\}\in Y_\alpha\},\\
    X_\beta& := X_F\setminus X_\alpha.
\end{align*}
%$X_{0}:=\{x\in X_{F}: \,\{x,y\}\in Y \text{ and } y\in N_{F}(x)\}$ and
We construct the desired $Y$-linkage $\{L_1,\ldots,L_k\}$ according to the following cases:
\begin{description}
    \item[Case (i)] each pair $\{s_j,t_j\}$ in $Y_\alpha$ is joined by the path $L_j:=s_jt_j$ in $F$;
    \item[Case (ii)] each pair $\{s_j,t_j\}$ in  $Y\setminus(Y_\alpha\cup\{s_1,t_1\})$ will be joined using the induction hypothesis on the $(d-1)$-cube $F^o$, the facet opposite to $F$; 
    \item[Case (iii)] the pair $\{s_1,t_1\}$ will be joined by an $X$-valid path in $F$.  
\end{description}

Case (i) is done, so we focus on Case (ii). We will need to project some terminals from $F$ to $F^o$, namely the ones in $X_\beta$.  However, unlike {\bf Scenario 2}, it may happen that the projection $\pi_{F^o}$ of a terminal vertex $x\in X_\beta$ onto $F^o$ is also a terminal vertex, which will cause the problem of having some paths intersect. In order to use the projection $\pi_{F^o}$, we define an injective map $\omega:X_\beta\to V(F)$ so that, for each  $x\in X_\beta$, we have that either $\omega(x)=x$ or $\omega(x)\in N_F(x)$ and that  $\{\omega(x),\pi_{F^o}(\omega(x))\} \cap (X\setminus \{x,\rho(x)\}) = \emptyset$. 

The motivation for the  map $\omega$ is to define a path $M_x = x\omega(x)\pi_{F^o}(\omega(x))$, of length at most 2, from each $x\in X_\beta$ to $F^{o}$ so that the vertices $\pi_{F^o}(\omega(x))$ and $\pi_{F^o}(\omega(\rho(x)))$ can be joined in $F^{o}$ by an $X$-valid path. \Cref{ex:cube-linkedness} and \cref{fig:5cube-paths}  illustrate the function $\omega$.  The construction of $\omega$ follows a general remark.
\begin{remark}
\label{rmk:cube-3}
 Whenever possible we set $\omega(x)=x$. 
 Only when the projection $\pi_{F^o}$ of $x$, namely $x^p_{F^o}$, is in  $X\setminus \{\rho(x)\}$, do we set $\omega(x)\in N_F(x)\setminus X_F$.
\end{remark}
 Lemma~\ref{lem:xvalid-path-existence} shows that the injective map $\omega$ exists.

\begin{lemma}\label{lem:xvalid-path-existence} There exists an injective map $\omega:X_\beta\to V(F)$ such that, for each $x\in X_\beta$,
\begin{equation}\label{eq:injectiveProjection}
\{\omega(x),\pi_{F^o}(\omega(x))\} \cap (X\setminus \{x,\rho(x)\}) = \emptyset.
\end{equation}
\end{lemma}

\begin{proof} 
Let $X'$ be the maximal subset of $X_\beta$ such that an injective map $\omega$ exists and satisfies Condition~\eqref{eq:injectiveProjection}. We will prove that $X'=X_\beta$ by contradiction. Assume that $X'\neq X_\beta$ and let $x\in X_\beta\setminus X'$. Then 
\begin{equation}
\label{eq:cube-4}
  x^p_{F^o} \in X\setminus \{\rho(x)\},  
\end{equation}
otherwise setting  $\omega(x) = x$ would satisfy \eqref{eq:injectiveProjection},   extending the injection $\omega$ to $X'\cup \{x\}$.

Let us define the set $O_{x}$ as the subset of vertices $v$ in $N_F(x)$ that \emph{cannot} be selected as $v=\omega(x)$, because  they violate either Condition~\eqref{eq:injectiveProjection} or the injectivity of $\omega$.

For a vertex $v$ to violate Condition~\eqref{eq:injectiveProjection}, it must be that either $v\in X$ (say $v$ is of \textit{type 1}), or  $v\not\in X$ and $\pi_{F^o}(v) \in X\setminus \{\rho(x)\}$ (say $v$ is \textit{type 2}) (equivalently this means that there is a terminal $z\in V(F^o)\cap (X\setminus \{\rho(x)\})$ such that $v=\pi_F(z)$).  For a vertex $v$ to violate the injectivity of $\omega$, there must exist $z\in X'$ such that $v=\omega(z)$; we remark that a vertex of type 1 or 2 could violate the injectivity of $\omega$. As a consequence, we say that $v$ is of \textit{type 3} if it violates the injectivity but it is not of type 1 or 2.  

Therefore the set $O_{x}$ can be defined as the  subset of $N_F(x)$ that violate Condition~\eqref{eq:injectiveProjection} or the injectivity of $\omega$.  
$$O_{x} = N_F(x)\cap \left( X \cup \{z^p_F: z\in X\setminus \{\rho(x)\}\} \cup \{\omega(z) : z\in X'\} \right).$$

%$$O_{x} = N_F(x)\cap \left( X \cup \{z^p_F: z^p_F\notin X,z\in X\setminus \{\rho(x)\}\} \cup \{\omega(z) : z\in X'\} \right).$$
%$$O_{x} = \bigcup_{z\in X'} \left(\omega(z) \cap N_F(x)\right)\; \bigcup \left(X\cap N_F(x)\right).$$
%The set $O_{x}$ consists of three types of neighbours $v$ of $x$ in $F$: 

\begin{remark}\label{rmk:type3}
For type 3 vertices, note that $z\not \in O_{x}$, otherwise $z,w(z)\in N_F(x)$, implying that the vertices $z$, $x$, and $\omega(z)$ would all be pairwise neighbours but there are no triangles in $Q_{d}$.
\end{remark}

Note that $|N_{F}(x)|= d-1$. Thus, to show that there is  a  suitable vertex $\omega(x)\in N_{F}(x)$, it suffices to show an injection $\psi_x$ from $O_{x}$ to $X\setminus \{x,x^{p}_{F^{o}},\rho(x)\}$, which would imply $|O_{x}|\le d-2$.
 
 The terminal $\rho(x)$ is in $X\setminus O_{x}$, otherwise the pair $\{x,\rho(x)\}\in Y_\alpha$ and $x\in X_\alpha$, a contradiction. Similarly, the projection   $x^{p}_{F^{o}}$ of $x$ onto $F^o$ is in $X\setminus O_{x}$, as it is in $X$ by \eqref{eq:cube-4}, and $O_x$ is a subset of $V(F)$ but $x^{p}_{F^{o}}\in V(F^o)$. Thus  $x, x^{p}_{F^{o}}, \rho(x)\in X\setminus O_{x}$, and $\rho(x)\ne x^{p}_{F^{o}}$ (by \eqref{eq:cube-4}). 
 
 Consider $v\in O_x$. We construct the injection $\psi_x$ as follows:
if $v\in X$ (it is of type 1), map $v$ to $v$.
If $v\notin X$ and $v= z^p_{F}$ for some $z\in X\setminus \{\rho(x)\}$ (it is of type 2), then map $v$ to $z$; here $z\in V(F^o)$. Finally, suppose that $v\notin X$ and $v=\omega(z)$ for some $z\in X'$. Further assume that $v$ is not of type 2 (namely, $v \neq z^p_F$ for any $z \in X\setminus \{\rho(x)\}$), since this case was already considered. Then $v$ is of type 3. Because  $v\notin X$, we have that $z\neq \omega(z)$. From $z\neq \omega(z)$ it follows that $z^p_{F^o}\in X$ (\cref{rmk:cube-3}).  If  $z\neq \rho(x)$, then map $v$ to $z$, else map $v$ to $z^p_{F^o}$. We prove in Claim~\ref{claim:injection-Ox} that the map $\psi_x$ is indeed injective.
\begin{claim}\label{claim:injection-Ox}
Let $\psi_x: O_x\to X\setminus\{x,\rho(x),x^p_{F^o}\}$ be the map defined by
\[\psi_x(v)=\begin{cases}
v,\,\text{if $v\in X$ (type 1)};\\ 
z,\,\text{if  $v\notin X$ and $v= z^p_{F}$ for  $z\in X\setminus \{\rho(x)\}$ (type 2)};\\ 
z,\,\text{if  $v\notin X$, $v=\omega(z)$ for $z\in X'\setminus\{\rho(x)\}$, $v$ is not of type  2 (type 3)} ;\\
%z^p_{F^o},\,\text{if  $v\notin X$, $v=\omega(z)$ for $z\in X'$,  $z= \rho(x)$, and $v$ is not of type 2 (type 3)};\\
\rho(x)^p_{F^o},\,\text{if  $v\notin X$, $v=\omega(\rho(x))$ for $\rho(x)\in X'$, $v$ is not of type 2 (type 3)};
\end{cases}   
\]
Then $\psi_x$ is injective.
\end{claim}

\Cref{fig:5cube-paths} depicts the different types of neighbours of the vertex $s_{2}$ and the injection $\psi_{s_{2}}$ from $O_{s_{2}}$ to $X\setminus \{s_{2},\pi_{F^{o}}(s_{2}),\rho(s_{2})=t_{2}\}$. 

\begin{figure}     
\includegraphics[scale=.9]{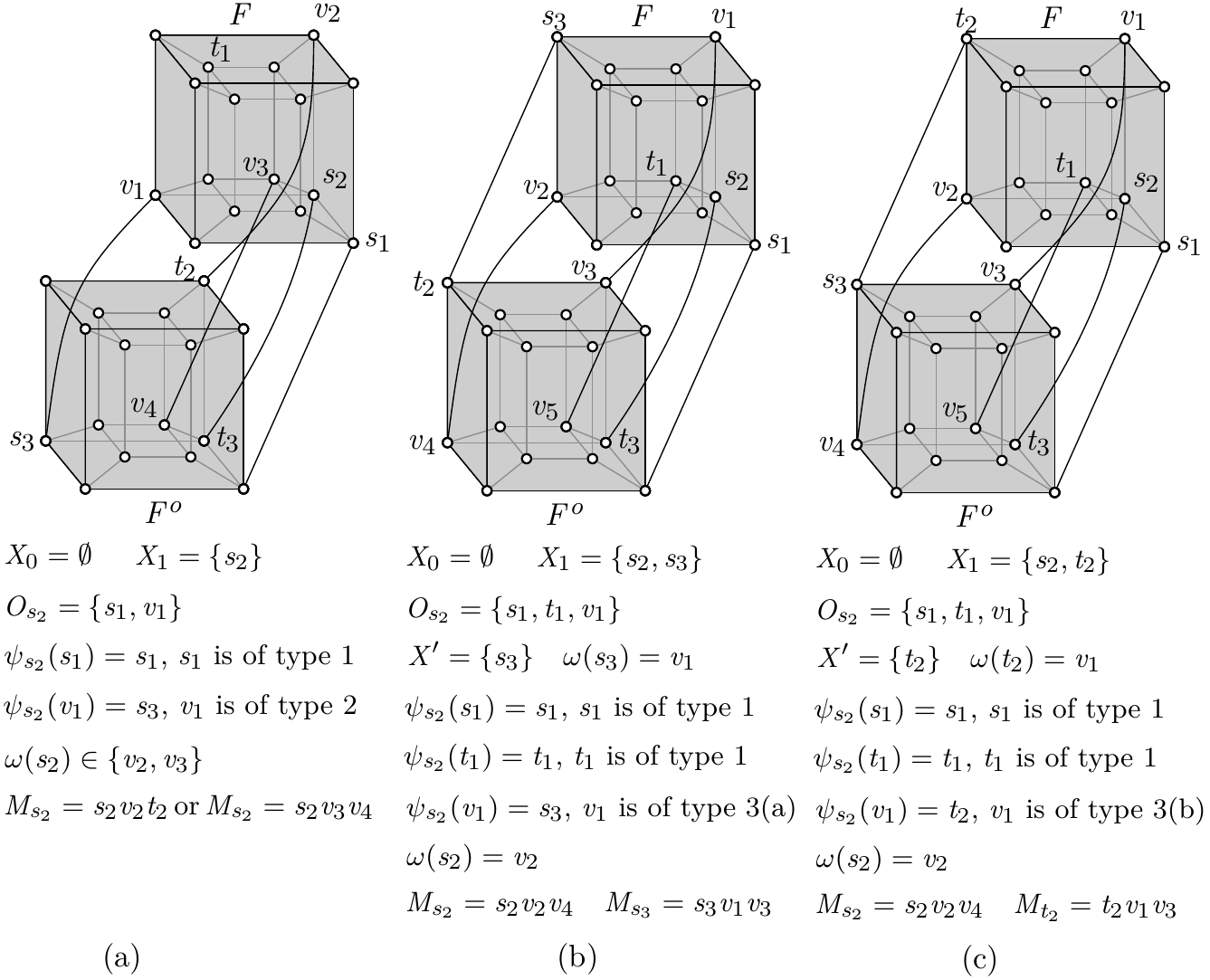}
\caption{Auxiliary figure for the third scenario \cref{thm:cube}.}\label{fig:5cube-paths} 
\end{figure}

\begin{claimproof} For the proof of the claim, we say that a $v$ is of type 3(a) when it satisfies the third line of the definition of $\psi_x$, namely $v\notin X$, $v=\omega(z)$ for some $z\in X'$, $z\neq \rho(x)$, and $v$ is not of type 2. And we say that $v$ is of type 3(b) if it satisfies the fourth line of the definition of $\psi_x$, namely if $v\notin X$, $v=\omega(z)$ for some $z\in X'$,  $z= \rho(x)$, and $v$ is not of type 2. First note the following:
\begin{itemize}
    \item If $v$ is of type 1, then $\psi_x(v)\in X_F\cap N_F(x)$.
    \item If $v$ is of type 2, then $\psi_x(v)\in X\cap V(F^o)$.
    \item if $v$ is of type 3(a), then $\psi_x(v)\in X_F\setminus N_F(x)$ (see \cref{rmk:type3}).
    \item if $v$ is of type 3(b), then $\psi_x(v)\in X\cap V(F^o)$.
\end{itemize}
Assume that $\psi_x(v_1) = \psi_x(v_2) = \gamma$ for $v_1,v_2\in O_x$. Then $\gamma\in X$.

Suppose that $\gamma\in V(F)$. If $\gamma\in N_F(x)$, then both $v_1$ and $v_2$ must be of type 1. In this case, from the definition of $\psi_x$ we conclude that $v_1=v_2=\gamma$. If instead $\gamma\in X_F\setminus N_F(x)$, then $v_1$ and $v_2$ are of type 3(a), and so $v_1 = v_2 = \omega(\gamma)$.

Assume that $\gamma\in V(F^o)$. Then three subcases can occur. If $v_1$ and $v_2$ are both of type 2, then $v_1=v_2=\gamma^p_F$. If instead $v_1$ and $v_2$ are both of type 3(b), then $v_1 = v_2 = \omega(\rho(x))$. Finally suppose that $v_1$ is of type 2 and $v_2$ is of type 3(b). Since $v_1$ is of type 2, we get that  $v_1 = \gamma^p_F$, and since  $v_2$ is of type 3(b) we get that %$v_2= \omega(\rho(x))$,
    %and 
    $\rho(x) =\gamma^p_F$. This implies that $\rho(x) = v_1 \in N_F(x)$, which in turn implies that $\{x,\rho(x)\}\in Y_\alpha$ and $x\in X_\alpha$, contradicting the assumption that $x\notin X_\alpha.$

Therefore, for every $v_1,v_2 \in O_x$, the equality $\psi_x(v_1) = \psi_x(v_2)$ implies that $v_1=v_2$, and the map $\psi_x$ is injective.
\end{claimproof}

\Cref{fig:5cube-paths} also shows the construction of the injection $\psi_{x}$. The existence of the injection $\psi_x$ from $O_{x}$ to $X\setminus \{x,x^{p}_{F^{o}},y\}$ shows that $|O_x|\le d-2$, which yields the existence of the vertex $\omega(x)\in N_{F}(x)$ satisfying Condition~\eqref{eq:injectiveProjection}, and therefore, the injection $\omega$ can be extended to $X'\cup \{x\}$.% of the desired path $M_x = xw_{x}\pi_{F^{o}}(w_x)$. 
 This contradicts the maximality of $X'$ and concludes the proof of the lemma.\end{proof}

For every $x\in X_\beta$, we define the path $M_x = x\omega(x)\pi_{F^o}(\omega(x))$, of length at most 2, from $X_\beta$ to $F^{o}$. The injectivity of $\omega$ and the injectivity of the restriction of $\pi_{F^o}$ to $V(F)$ ensure that the paths $M_x$ are pairwise disjoint. For every $x\in X\cap V(F^o)$ we set $M_x: = x$. Because $\omega$ satisfies Condition~\eqref{eq:injectiveProjection}, the only case when the paths $M_x$ and $M_y$ intersect is when $y=\rho(x)$, which is not a problem.

We now finalise this third scenario. Applying Lemma~\ref{lem:xvalid-path-existence} to $X_\beta$, we get the paths $M_{x}$ from all the terminals in $X_\beta$ to $F^{o}$. We also consider the paths $M_x=x$  for $x\in X\cap V(F^o)$. This implies that we have pairwise disjoint  paths $M_x$ from $X\setminus (X_\alpha\cup\{s_1,t_1\})$ to $F^o$. Denote by $X^o$ the set of vertices in $M_{x}\cap V(F^o)$ for each $x$ in $X\setminus (X_\alpha\cup\{s_1,t_1\})$.  Then
\begin{equation}
\label{eq:cube} |X^o| = |X\setminus (X_\alpha\cup\{s_1,t_1\})| \le d-1.
%|X'|+|X_{1}|+|\pi_{F^{o}}(X_{1})|+|X_{3}|+|Y_{3}|\le 2(k-1)\le d-1.
\end{equation} Let $Y^o$ be the corresponding pairing of the vertices in $X^o$: if $\{x,\rho(x)\}\in Y$ with $x,\rho(x)\in X\setminus (X_\alpha\cup\{s_1,t_1\})$, then the corresponding pair in $Y^o$ is $\{M_{x}\cap V(F^{o}),M_{\rho(x)}\cap V(F^{o})\}$.
 
The induction hypothesis ensures that $F^{o}$ is $(k-1)$-linked. As a consequence, because of \eqref{eq:cube}, $|Y^o|\le (d-1)/2=k-1$, and so there is a $Y^o$-linkage in $F^o$. The $Y^o$-linkage gives the existence of paths $L_{i}^{p}$ in $F^{o}$ between $M_{s_{i}}\cap V(F^{o})$ and $M_{t_{i}}\cap V(F^{o})$ for $s_{i},t_{i} \in X\setminus (X_\alpha\cup\{s_1,t_1\})$. Each path $L_{i}^{p}$ is then extended with the paths $M_{s_{i}}$ and $M_{t_{i}}$ to obtain a path $L_{i}:=s_{i}-t_{i}$ for $s_{i},t_{i} \in X\setminus (X_\alpha\cup\{s_1,t_1\})$. 
  
%   For every pair $(s_i,t_i)$ in $Y_\alpha$, the vertices $s_i,t_i$ are neighbours, therefore are linked. 
   
   It only remains to show the existence of a path $L_{1}:=s_{1}-t_{1}$ in $F$ disjoint from the paths $L_{i}$ for $i\in [2,k]$ (Case (iii)). Suppose that we cannot find a path $L_{1}$ disjoint from the other paths $L_{i}$ with  $i\in [2,k]$. Then there would be a set $S$ in $V(F)$ separating $s_{1}$ from $t_{1}$. The set $S$ would consist of terminal vertices in $X_{F}$ and nonterminal vertices in $\omega(X_\beta)$ (see Lemma \ref{lem:xvalid-path-existence}). % and nonterminal vertices on some path $M_{x}$ for $x\in V(F)\cap (X\setminus (X_\alpha\cup \{s_1,t_1\})$. Each nonterminal vertex in $S$, by definition of the projection operator $M$, amounts to the existence of a terminal vertex in $F^{o}$, namely $x^{p}_{F^{o}}$.
    Since $x\neq \omega(x)$ implies that $\omega(x)\notin X$ and $x^p_{F^o}\in X\setminus \{\rho(x)\}$ (\cref{rmk:cube-3}), we find that, for each such a $\omega(x)$, there is a unique  $x^p_{F^o}\in X\cap V(F^o)$; that is 
\[|S|\le |X_{F} \cup (\omega(X_\beta)\setminus {X})|\leq |X_F| + |X\cap V(F^o)| = |X\setminus \{s_1,t_1\}| = d-1.\]
   By the $(d-1)$-connectivity of $F$, which follows from Balinski's theorem (\cref{thm:Balinski}), the set $S$ would have cardinality $d-1$, which  implies that every terminal in $X_F$ and every nonterminal $\omega(x)$ in $F$ are in $S$. 
   By \cref{prop:cube-cutsets}, the set $S$ would consist of the neighbours of $s_{1}$ or $t_{1}$, say of $s_{1}$, and therefore the vertices in $S$ are pairwise nonadjacent (as there are no triangles in $Q_d$), that is, $\omega(x)=x$ for each $x\in X_\beta$ ($\omega(x)$ is either $x$ or a neighbour of $x$). This implies that $S\subset X$, and so $|S|=|X\setminus \{s_1,t_1\}|=d-1$. Hence, $V(F)\supset S\cup \{s_1,t_1\} = X$, which contradicts the assumptions of \textbf{Scenario 3} that $X\not\subset V(F)$.

The proof of the theorem is now complete.
 \end{proof}
 
 We next illustrate \textbf{Scenario 3} of \cref{thm:cube}.   
 \begin{example}
\label{ex:cube-linkedness} 
  Consider \Cref{fig:5cube-paths}. Then $d=5$, $k=3$, and
 \[\text{$X:=\{s_{1},t_{1},s_{2},t_{2},s_{3},t_{3}\}$ and  $Y:=\{\{s_{1},t_{1}\},\{s_{2},t_{2}\},\{s_{3},t_{3}\}\}$}.\] \cref{lem:xvalid-path-existence} gives us an injection $\omega$ from $(X\setminus \{s_{1},t_{1}\})\cap V(F)$ to $V(F)$ to find $X$-valid paths from the $d-1$ terminals in $(X\setminus \{s_{1},t_{1}\})\cap V(F)$ to $V(F^{o})$. Then we use the $2$-linkedness of $F^{o}$ to find the paths $L_{2}, L_{3}$. We follow the notation of the proof of \cref{thm:cube}.  For each terminal $x\in (X\setminus \{s_{1},t_{1}\})\cap V(F)$,  we define the path $M_{x}:=x\omega(x)\pi_{F^{o}}(\omega(x))$, while for each $x\in X\cap V(F^{o})$ we define the path $M_{x}:=x$. The set $X^{o}$ is the set of vertices in $M_{x}\cap V(F^{o})$. 

 First look at \cref{fig:5cube-paths}(a).  Then $\omega(s_2)\in \{v_2,v_3\}$,  $M_{s_{2}}:=s_{2}v_{2}t_{2}$ or $M_{s_{2}}:=s_{2}v_{3}v_{4}$, say $M_{s_{2}}=s_{2}v_{2}t_{2}$, $M_{t_{2}}:=t_{2}$, $M_{s_{3}}:=s_{3}$, and $M_{t_{3}}:=t_{3}$. It follow that
 \[\text{$X^{o}:=\{t_{2},s_{3},t_{3}\}$ and  $Y^{o}:=\{\{t_{2}\},\{s_{3},t_{3}\}\}$}.\] 
We have paths $L_{2}^{p}:=t_{2}$ and $L_{3}^{p}:=s_{3}-t_{3}$ in $F^{o}$. A path $L_{1}:=s_{1}-t_{1}$ in $F$ should avoid only  $s_{2}$ and $\omega(s_{2})=v_{2}$, and so it exists by the 4-connectivity of $F$.   As a result, the $Y$-linkage in this setting is given by $L_{1}$, $L_{2}:=s_{2}M_{s_{2}}t_{2}$, and $L_{3}:=s_{3}L^{p}_{3}t_{3}$. 

Now look at \cref{fig:5cube-paths}(b). If $\omega(s_3)=v_1$, then $\omega(s_2)=v_2$. Besides,   $M_{s_{2}}:=s_{2}v_{2}v_{4}$, $M_{s_{3}}:=s_{3}v_{1}v_{3}$, $M_{t_{2}}:=t_{2}$, and $M_{t_{3}}:=t_{3}$. It follow that
 \[\text{$X^{o}:=\{v_{3},v_{4},t_{2},t_{3}\}$ and  $Y^{o}:=\{\{v_{4},t_{2}\},\{v_{3},t_{3}\}\}$}.\] 
We have paths $L_{2}^{p}:=v_{4}-t_{2}$ and $L_{3}^{p}:=v_{3}-t_{3}$ in $F^{o}$. A path $L_{1}:=s_{1}-t_{1}$ in $F$ should avoid  $S:=\{s_{2}, s_{3}, \omega(s_{2})=v_{2}, \omega(s_{3})=v_{1}\}$. The path $L_{1}$ exists thanks to \cref{prop:cube-cutsets}, as the set $S$ can separate $s_{1}$ and $t_{1}$ only if it consists of the neighbours of $s_{1}$ or $t_{1}$, which is not possible.   As a result, the $Y$-linkage in this setting is given by $L_{1}$, $L_{2}:=s_{2}M_{s_{2}}v_{4}L^{p}_{2}t_{2}$, and $L_{3}:=s_{3}M_{s_{3}}v_{3}L^{p}_{3}t_{3}$. 

Finally look at \cref{fig:5cube-paths}(c). If $\omega(t_2)=v_1$, then $\omega(s_2)=v_2$. Besides,  $M_{s_{2}}:=s_{2}v_{2}v_{4}$, $M_{s_{3}}:=s_{3}$, $M_{t_{2}}:=t_{2}v_{1}v_{3}$, and $M_{t_{3}}:=t_{3}$. It follow that
 \[\text{$X^{o}:=\{v_{4},s_{3},v_{3},t_{3}\}$ and  $Y^{o}:=\{\{v_{4},v_{3}\},\{s_{3},t_{3}\}\}$}.\] 
We have paths $L_{2}^{p}:=v_{4}-v_{3}$ and $L_{3}^{p}:=s_{3}-t_{3}$ in $F^{o}$. A path $L_{1}:=s_{1}-t_{1}$ in $F$ should avoid  $S:=\{s_{2}, t_{2}, \omega(s_{2})=v_{2}, \omega(t_{2})=v_{1}\}$. The path $L_{1}$ exists thanks to \cref{prop:cube-cutsets}, as the set $S$ can separate $s_{1}$ and $t_{1}$ only if it consists of the neighbours of $s_{1}$ or $t_{1}$, which is not possible.   As a result, the $Y$-linkage in this setting is given by $L_{1}$, $L_{2}:=s_{2}M_{s_{2}}v_{4}L^{p}_{2}v_{3}M_{t_{2}}t_{2}$, and $L_{3}:=s_{3}L^{p}_{3}t_{3}$. \hfill  $\qed$
\end{example}

We are now in a position to answer Wotzlaw's question (\cite[Question 5.4.12]{Ron09}). \cref{thm:cube} in conjunction with \cref{lem:k-linked-subgraph} gives the answer.

\begin{theorem} 
\label{thm:weak-linkedness-cubical} For every $d\ge 1$, a  cubical $d$-polytope is $\floor{d/2}$-linked.  
\end{theorem}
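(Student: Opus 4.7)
The proof proposal follows the hint in the paper: combine \cref{thm:cube} with \cref{lem:k-linked-subgraph}. Let $P$ be a cubical $d$-polytope and set $k := \floor{d/2}$. The plan is to handle the small dimensions $d\le 4$ separately and, for $d\ge 5$, reduce $k$-linkedness of $P$ to $k$-linkedness of one of its facets, which is a cube of dimension $d-1\ge 4$.

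For $d=1,2,3$ we have $k\le 1$, and $k$-linkedness is either vacuous or equivalent to connectedness, which holds by Balinski's theorem (\cref{thm:Balinski}). For $d=4$ we have $k=2$, and \cref{prop:4polytopes} already asserts that every $4$-polytope is $2$-linked, so in particular every cubical $4$-polytope is $2$-linked.

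Now assume $d\ge 5$. Let $F$ be any facet of $P$; since $P$ is cubical, $F$ is a $(d-1)$-cube. Because $d-1\ge 4$, in particular $d-1\ne 3$, \cref{thm:cube} applies and gives that the graph of $F$ is $\floor{d/2}=k$-linked. On the other hand, by Balinski's theorem (\cref{thm:Balinski}) the graph of $P$ is $d$-connected, and since $2k\le d$, it is $2k$-connected. Thus $G(P)$ is a $2k$-connected graph containing the $k$-linked subgraph $G(F)$.

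\cref{lem:k-linked-subgraph} then yields that $G(P)$ itself is $k$-linked, which is exactly the desired conclusion. The main conceptual step is the dimensional book-keeping to ensure that \cref{thm:cube} can be invoked on a facet (i.e.\ that $d-1\ne 3$), which is the reason the low-dimensional cases must be handled separately; once this is done, the argument is an immediate combination of \cref{thm:cube}, \cref{thm:Balinski}, and \cref{lem:k-linked-subgraph}, with no further obstacles.
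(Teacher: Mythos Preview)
Your proof is correct and follows essentially the same approach as the paper's own proof: handle $d\le 4$ directly (trivial or via \cref{prop:4polytopes}), and for $d\ge 5$ apply \cref{thm:cube} to a facet (a $(d-1)$-cube with $d-1\ge 4$) and combine the $d$-connectivity from \cref{thm:Balinski} with \cref{lem:k-linked-subgraph}. The only cosmetic difference is that the paper treats $d=1,2$ and $d=3$ in separate sentences, whereas you group $d\le 3$ together; the content is identical.
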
  
\begin{proof} Let $P$ be a cubical $d$-polytope. The results for $d=1,2$ are trivial. The case of $d=3$ follows from the connectivity of the graph of $P$ (Balinski's theorem), while the case of $d=4$ follows from \cref{prop:4polytopes}. For $d\ge 5$, since a facet of $P$ is a $(d-1)$-cube with $d-1\ge 4$,  by \cref{thm:cube} it is $\floor{d/2}$-linked. So the $d$-connectivity of the graph of $P$, which follows from Balinski's theorem (\cref{thm:Balinski}), together with \cref{lem:k-linked-subgraph}  establishes the proposition.
\end{proof} 

We improve \cref{thm:weak-linkedness-cubical} in a subsequent paper \cite{BuiPinUgo20a}, where we establish the maximum possible linkedness of $\floor{(d+1)/2}$ for a cubical $d$-polytope with $d\ne 3$. 
 
 \subsection{Strong linkedness of the cube} 
We now show a strong linkedness result for the cube.

\begin{theorem}[Strong linkedness of the cube]\label{thm:cube-strong-linkedness}  For every $d\ge 1$, a $d$-cube is strongly $\floor{d/2}$-linked. 
\end{theorem}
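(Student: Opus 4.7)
The plan is induction on $d$. The cases $d \leq 3$ are trivial: since $\lfloor d/2 \rfloor \leq 1$, strong $\lfloor d/2 \rfloor$-linkedness reduces to $Q_d - v$ being connected (when $\lfloor d/2 \rfloor = 1$), which follows from \cref{thm:Balinski} applied to $Q_d$. For $d = 4$, the conclusion is immediate from \cref{thm:4polytopes-strong-linkedness} because the $4$-cube is a cubical $4$-polytope.

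For the inductive step $d \geq 5$, the parity of $d$ matters. When $d$ is odd, set $k := \lfloor d/2 \rfloor = (d-1)/2$; by \cref{thm:cube}, $Q_d$ is $(k+1)$-linked, so the dummy-pair trick applies cleanly. Given a vertex $v$, a set $X$ of $2k$ vertices of $Q_d - v$, and a pairing $Y$ of $X$, pick any $w \in V(Q_d) \setminus (X \cup \{v\})$, which exists since $|V(Q_d)| = 2^d \geq 2k + 2$. Applying $(k+1)$-linkedness to the augmented pairing $Y \cup \{\{v, w\}\}$ yields a linkage whose $v$--$w$ path is vertex-disjoint from the $k$ paths realising $Y$; those $k$ paths avoid $v$ and form the desired $Y$-linkage in $Q_d - v$.

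When $d \geq 6$ is even with $k := d/2$, the dummy-pair trick is unavailable: $\lfloor (d+1)/2 \rfloor = k$ is the universal upper bound on the linkedness of a $d$-polytope, so $Q_d$ is not $(k+1)$-linked. The plan is instead to re-execute the three-scenario argument of \cref{thm:cube} with $v$ woven in as an additional forbidden vertex. Given $v$, a set $X$ of size $2k = d$ in $V(Q_d) \setminus \{v\}$, and a pairing $Y$, distinguish: (A) $X \subseteq V(F)$ for some facet $F$; (B) every pair in $Y$ is at distance $d$ in $Q_d$; (C) otherwise. In (A), if $v \notin V(F)$ the $k$-linkedness of $F$ from \cref{thm:cube} closes the case; if $v \in V(F)$, apply \cref{lem:short-distance} to the $(d+1)$-set $X \cup \{v\}$, treating $v$ as the unpaired vertex, to extract an $(X \cup \{v\})$-valid path in $F$ for some pair, and link the remaining $k-1$ pairs in $F^o$ (which contains neither $v$ nor any terminal) via the $k$-linkedness of $F^o$. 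Scenarios (B) and (C) replay the associated-pair selection of \cref{lem:facets-association} and the injective-map construction of \cref{lem:xvalid-path-existence}, now strengthening the requirement so that each $\omega(x)$ and its projection $\pi_{F^o}(\omega(x))$ also avoid $v$. The counting argument survives because $|X| = d$ here (rather than $d+1$ as in \cref{thm:cube}), so the target set of the injection $\psi_x$ has size $d - 3$ and $|O_x| \leq d - 3$; adding at most one extra forbidden position (either $v$ when $v \in V(F)$ or $\pi_F(v)$ when $v \in V(F^o)$, but not both, since vertices of $V(F^o)$ cannot appear in $N_F(x)$) yields $|O_x'| \leq d - 2$, still leaving at least one free choice for $\omega(x)$ in $N_F(x)$.

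The main obstacle is the even case. \cref{lem:k-linked-subgraph}, which suffices for the even branch of \cref{thm:cube}, is unavailable here because $Q_d - v$ is only $(2k-1)$-connected rather than $2k$-connected, so the clean reduction to a $k$-linked facet breaks down. One must therefore re-run the intricate odd-branch scenario analysis with $v$ threaded throughout, and the delicate point is scenario (C), where the injectivity and avoidance conditions on $\omega$ must be preserved while also dodging $v$. Invoking the already-established odd case of the present induction, which supplies strong $\lfloor (d-1)/2 \rfloor = (k-1)$-linkedness of $(d-1)$-dimensional facets, is what provides the extra flexibility in those subcases where the construction is forced to route through a facet that contains $v$.
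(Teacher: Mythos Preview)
Your handling of the base cases and of odd $d\ge 5$ matches the paper exactly: the dummy-pair trick using the $(k+1)$-linkedness of $Q_d$ from \cref{thm:cube} is precisely the paper's argument.

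For even $d\ge 6$, however, you have taken a much harder route than the paper. The paper does \emph{not} replay the three-scenario machinery of \cref{thm:cube}. Instead it observes that $|X\setminus\{x\}|=2k=d$, so by \cref{lem:facets-association} at most $d-1$ of the $d$ pairs of opposite facets are associated with $X\setminus\{x\}$; pick a pair $\{F,F^o\}$ that is not, and without loss of generality let $x\in V(F^o)$. Non-association means the projection $\pi_F$ restricted to $X\setminus\{x\}$ is injective, giving a set $X^p$ of $d=2k$ distinct vertices in $F$ with the induced pairing $Y^p$. Since $F$ is a $(d-1)$-cube with $d-1\ge 5$, \cref{thm:cube} says $F$ is $k$-linked, so $Y^p$ has a linkage in $F$. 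Extending each path by the single edge back to any terminal that lay in $F^o$ produces the desired $Y$-linkage, and it avoids $x$ automatically because $x\in V(F^o)$ is not a terminal and the linkage touches $F^o$ only at terminals. That is the whole argument.

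Your plan to thread $v$ through Scenarios (B) and (C) is plausible---the extra slack from $|X|=d$ rather than $d+1$ is real, and invoking the already-settled odd case to get strong $(k-1)$-linkedness of the $(d-1)$-dimensional facets is legitimate---but several steps you leave unwritten are not routine: in the analogue of Scenario~1 you must control where $v$ sits relative to the split of paths between $F$ and $F^o$, and in the analogue of Scenario~3 you must redo the final separator argument that produces $L_1$ in $F$ with $v$ possibly enlarging the obstruction set. None of this is needed. Your diagnosis that \cref{lem:k-linked-subgraph} fails because $Q_d-v$ is only $(2k-1)$-connected is correct, but the right replacement is the one-line projection via \cref{lem:facets-association}, not a replay of the odd-$d$ scenario analysis.
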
 
	
\begin{proof} Let $G$ be the graph of $Q_d$. Let $k:=\floor{d/2}$, and let $X$ be a set of $2k+1$ vertices in the $d$-cube for $d\ge 1$. Arbitrarily pair $2k$ vertices in $X$ to obtain $Y:=\{\{s_{1},t_{1}\},\ldots,\{s_{k},t_{k}\}\}$.  Let $x$ be the vertex of $X$ not being paired in $Y$. We aim to find a $Y$-linkage $\{L_{1},\ldots, L_{k}\}$ where each path $L_{i}$ joins the pair $\{s_{i},t_{i}\}$ and avoids the vertex $x$. 

The theorem is trivially true for the cases $d=1,2$. So assume that $d\ge 3$.

Suppose that $d=2k+1$. In this case, the result follows from Balinski's theorem and \cref{thm:cube}. For $d=3$, we have that $k=1$, and so the 3-connectivity of $G$ (Balinski's theorem) ensures that we can find a path $L_1=s_1-t_1$ that avoids $x$. For $d\ge 5$, we let $y$ be a vertex of $G$ not in $X$. Then, since $G$ is $(k+1)$-linked (\cref{thm:cube}), we can find $k+1$ disjoint $L_1,\ldots,L_k,L_{k+1}$ paths such that $L_i=s_i-t_i$, for $i\in [1,k]$, and $L_{k+1}:=x-y$. It is now plain that the linkage $Y$ in $G-x$ gives that $G-x$ is $k$-linked.

Suppose that $d=2k$. The result for $d=4$ is given by \cref{thm:4polytopes-strong-linkedness}. So assume that $d\ge 6$. 

From \cref{lem:facets-association} it follows that there exists a pair $\{F,F^o\}$ of opposite facets of $Q_{d}$ that is not associated with $X_{x}:=X\setminus \{x\}$, since $|X\setminus \{x\}|=d$ and there are $d$  pairs $\{F,F^o\}$ of opposite facets in $Q_{d}$. Assume $x\in V(F^{o})$.  Let $X^{p}:=\pi_{F}(X_{x})$; that is, the set $X^{p}$ comprises the vertices in $X_{x}\cap V(F)$ plus the projections of $X_{x}\cap V(F^{o})$ onto $F$. Denote by $Y^{p}$ the corresponding pairing of the vertices in $X^{p}$; that is, $Y^{p}:=\{\{\pi_{F}(s_{1}), \pi_{F}(t_{1})\},\ldots, \{\pi_{F}(s_{k}), \pi_{F}(t_{k})\}\}$. Then $|X^{p}|=d$ and $|Y^{p}|=k$.  Find a $Y^{p}$-linkage $\{L_{1}^{p},\ldots, L_{k}^{p}\}$ in $F$ with $L_{i}^{p}:=\pi_{F}(s_{i})-\pi_{F}(t_{i})$ by resorting to the $k$-linkedness of $F$ (\cref{thm:cube}).  Adding $s_{i}\in V(F^{o})$ or $t_{i}\in V(F^{o})$ to the path $L_{i}^{p}$, if necessary, we extend the linkage $\{L_{1}^{p},\ldots, L_{k}^{p}\}$ to the required $Y$-linkage. \end{proof}

\section{Linkedness inside the cube}

The \textit{boundary complex} of a polytope $P$ is the set of faces of $P$ other than $P$ itself. And the \textit{link} of a vertex $v$ in a polytope $P$, denoted $\lk(v,P)$, is the set of faces of $P$ that do not contain $v$ but lie in a facet of $P$ that contains $v$ (\cref{fig:4-cube}). According to \cite[Ex.~8.6]{Zie95}, the link of a vertex in a $d$-polytope is combinatorially equivalent to the boundary complex of a $(d-1)$-polytope; in particular, for $d\ge 3$ the graph of the link is isomorphic to the graph of a $(d-1)$-polytope. It follows that the link of a vertex of $Q_{d+1}$ is combinatorially equivalent to the boundary complex of a cubical $d$-polytope.

\begin{remark}
\label{rmk:opposite-vertex}
The link of a vertex $v$ in a $d$-cube $Q_{d}$ is obtained by removing all the faces of $Q_{d}$ that contain $v$ or $ v^{o}$ (\cref{fig:4-cube}).  \end{remark} 

\begin{figure}
\includegraphics{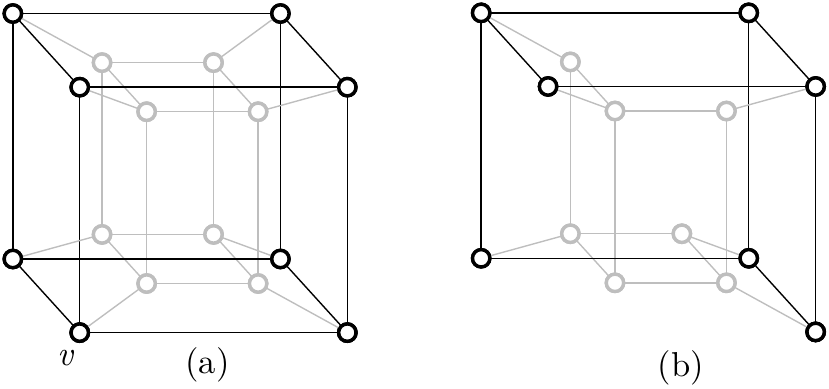} 
\caption{(a) The 4-cube with a vertex $v$ highlighted. (b) The link of the vertex $v$.}\label{fig:4-cube} 
\end{figure}

We verify that, for every $d\ge 2$ such that $ d\ne 3$, the link of a vertex in a $(d+1)$-cube is $\floor{(d+1)/2}$-linked.
 
\begin{proposition}
\label{prop:link-cubical} For every $d\ge 2$ such that $ d\ne 3$, the link of a vertex in a $(d+1)$-cube $Q_{d+1}$ is $\floor{(d+1)/2}$-linked.  
\end{proposition}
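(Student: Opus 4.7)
The first step is to identify the graph of $\lk(v,Q_{d+1})$. By \cref{rmk:opposite-vertex} its vertex set is $V(Q_{d+1}) \setminus \{v, v^o\}$; for any edge $uw$ of $Q_{d+1}$ with $u,w \notin \{v,v^o\}$ and direction $i$, there must exist a coordinate $j \ne i$ with $u_j=w_j=v_j$, otherwise both endpoints would coincide with $v^o$. Hence the edge lies in the facet $\{x_j = v_j\}$ containing $v$, and therefore in the link. So the graph of the link is the induced subgraph $G := Q_{d+1}[V(Q_{d+1}) \setminus \{v, v^o\}]$. Write $k := \floor{(d+1)/2}$, fix an arbitrary $X \subseteq V(G)$ with $|X|=2k$ and a pairing $Y$ of $X$; the goal is to produce a $Y$-linkage in $G$. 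The case $d=2$ is immediate since $G$ is then a hexagon, hence $1$-linked. For $d \ge 4$ the plan splits on the parity of $d$.

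\textbf{Case 1 ($d$ even, $d \ge 4$).} Since $d+1$ is odd and at least $5$, \cref{thm:cube} gives that $Q_{d+1}$ is $\floor{(d+2)/2} = k+1$-linked. Apply $(k+1)$-linkedness to the augmented set $X \cup \{v, v^o\}$ with pairing $Y \cup \{\{v, v^o\}\}$: the resulting $k+1$ vertex-disjoint paths include a $v$-$v^o$ path, and the remaining $k$ paths realize $Y$. Disjointness from the $v$-$v^o$ path prevents them from visiting $v$ or $v^o$, producing the required $Y$-linkage in $G$.

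\textbf{Case 2 ($d$ odd, $d \ge 5$).} Now $d+1 = 2k$ is even and \cref{thm:cube} only ensures that $Q_{d+1}$ is $k$-linked, so the augmentation trick fails. The plan is to mimic the three-scenario structure in the proof of \cref{thm:cube}. By \cref{lem:facets-association}, at most $|X|-1 = d$ of the $d+1$ pairs of opposite facets of $Q_{d+1}$ are associated with $X$, so there is an un-associated pair $\{F, F^o\}$; without loss of generality $v \in F$ and $v^o \in F^o$. Non-association ensures that every projection of a terminal onto the opposite facet lies outside $X$, and since $v, v^o \notin X$ these projections also avoid $\{v, v^o\}$. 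Case-split on the distribution of $X$ and $Y$: if all of $X$ lies in one facet, say $F$, use \cref{lem:short-distance} to join one pair $\{s_1,t_1\} \in Y$ by an $X$-valid path in $F$ that also avoids $v$, then project the remaining $k-1$ pairs to $F^o$ and close them off using the $k$-linkedness of $F^o$ (a $d$-cube with $d \ge 5$, \cref{thm:cube}) together with the strong $(k-1)$-linkedness of $F^o - v^o$ (\cref{thm:cube-strong-linkedness}); otherwise, route intra-facet pairs inside their home facet and mixed pairs across $\{F,F^o\}$ by projection, invoking the same linkedness tools.

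\textbf{The main obstacle} is Case 2, where every resource is tight: $Q_{d+1}$ is only $k$-linked, each $d$-cube facet is only $k$-linked, each facet minus its bad vertex is only strongly $(k-1)$-linked, and $|X| = d+1$ saturates \cref{lem:facets-association}. The delicate point is handling pairs whose natural in-facet paths are obstructed by $X$ or by $v,v^o$, or whose projections would collide with terminals; in Case 2, \cref{thm:cube-strong-linkedness} and \cref{lem:short-distance} together must play the role that the extra unit of linkedness of $Q_{d+1}$ played in Case 1.
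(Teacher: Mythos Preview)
Your Case 1 (even $d\ge 4$) is correct and is cleaner than the paper's treatment of that parity: adding the extra pair $\{v,v^{o}\}$ to $Y$ and invoking the $(k+1)$-linkedness of $Q_{d+1}$ from \cref{thm:cube} forces the $Y$-linkage off both forbidden vertices in one stroke. The paper does not split by parity, so it never uses this shortcut.

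Case 2 (odd $d\ge 5$) has a genuine gap. The all-in-one-facet sub-case is essentially right once you repair the call to \cref{lem:short-distance}: that lemma needs a terminal set of size $(\dim P)+1=d+2$ in the facet, so you must apply it to $X\cup\{v\}$ with $v$ left unpaired; this yields an $(X\cup\{v\})$-valid (hence $v$-avoiding) path in $F$ for at least one pair, and the remaining $k-1$ pairs project to $F^{o}$ and are linked there avoiding $v^{o}$ by \cref{lem:k-linked-def} applied to the $k$-linked cube $F^{o}$. The ``otherwise'' branch, however, is not an argument. Routing intra-facet pairs in their home facet and mixed pairs by projection can push up to $k$ pairs into one facet minus its bad vertex, and that graph is only $(k-1)$-linked. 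Rescuing this needs a careful split of the mixed pairs between $F$ and $F^{o}$, a count showing the vertices-to-avoid in each facet stay within the $2k-2\ell$ budget of \cref{lem:k-linked-def}, handling of the (at most one) terminal whose projection hits $v$ or $v^{o}$, and a disjointness check at the crossover vertices. You have correctly flagged this as the obstacle but not resolved it.

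The paper sidesteps the bookkeeping with one device that works for both parities. When not all of $X$ lies in one facet it picks $t_{1}\in X\cap V(F^{o})$ (arranged so that if any $F^{o}$-terminal projects to $v$ it is this one), projects every other terminal to $F$, and applies the $k$-linkedness of $F$ to the pairing
\[
\bigl\{\{\pi_{F}(s_{1}),v\},\{\pi_{F}(s_{2}),\pi_{F}(t_{2})\},\ldots,\{\pi_{F}(s_{k}),\pi_{F}(t_{k})\}\bigr\},
\]
using $v$ itself as a dummy terminal. The $k-1$ paths not ending at $v$ then avoid $v$ for free and extend to $L_{2},\ldots,L_{k}$; the path to $v$ is truncated at the neighbour $w$ of $v$ and completed to $t_{1}$ through $F^{o}$ by ordinary $d$-connectivity, since the set to dodge in $F^{o}$ has at most $k\le d-1$ vertices. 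The paper's all-in-one-facet case is likewise simpler than yours: it links $Y$ directly in $F$ by $k$-linkedness and, if one path passes through $v$, reroutes that subpath through $F^{o}$. The ``$v$ as dummy terminal'' trick is the idea your plan is missing.
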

\begin{proof} 
The proposition trivially holds for the case $d=2$, and the case $d= 4$ is given by \cref{prop:4polytopes}. So assume that $d\ge 5$

Let $k:=\floor{(d+1)/2}$. Then $k\ge 3$. Let $v$ and $v^{o}$ be opposite vertices of $G(Q_{d+1})$; that is, $\dist_{Q_{d+1}}(v,v^{o})=d+1$. Let $X$ be a given set of $2k$ vertices in $\lk(v,Q_{d+1})$ (\cref{rmk:opposite-vertex}), and let $Y:=\{\{s_{1},t_{1}\},\ldots,\{s_{k},t_{k}\}\}$ be an arbitrary pairing of the vertices in $X$. We show that there exists a $Y$-linkage $\{L_1,\ldots,L_k\}$ in $\lk(v,Q_{d+1})$ where  each path $L_{i}$ joins the pair $\{s_{i},t_{i}\}$ and avoids the vertices $v$ and $v^o$.
 
Since $|X|-1\le d$ and there are $d+1$ pairs of opposite facets in $Q_{d+1}$, from \cref{lem:facets-association} there exists a pair $\{F,F^o\}$ of opposite facets of $Q_{d+1}$ that is not associated with $X$. This means that, for every $x\in X\cap V(F)$, its projection $\pi_{F^o}^{Q_{d+1}}(x)\not \in X$, and that, for every $x\in X\cap V(F^o)$, its projection  $\pi_{F}^{Q_{d+1}}(x)\not \in X$. Henceforth we write $\pi_{F}$ rather than $\pi_{F}^{Q_{d+1}}$. Assume that $v\in F$ and $v^{o}\in F^o$. We consider two cases based on the number of terminals in the facet $F$.%; for the sake of readability, the third case is in turn decomposed into two subcases highlighted in bold. 

In what follows, we implicitly use the $d$-connectivity of $F$ or $F^{o}$ for $d\ge5$. 
 
\begin{case} $|X\cap V(F)|=d+1$.\end{case}

Since $F$ is a $d$-cube, it is $\floor{(d+1)/2}$-linked by \cref{thm:cube}, and hence, we can find $k$ pairwise disjoint paths $\bar L_{1},\ldots,\bar L_{k}$ in $F$ between $s_{i}$ and $t_{i}$ for each $i\in[1,k]$. If no path $L_{i}$ passes through $v$, we let $L_i:=\bar L_i$ for $i\in [1,k]$, and so $\{L_1,\ldots, L_k\}$ is the desired $Y$-linkage. So suppose one of those paths, say $\bar L_{1}$, passes through $v$; there can be only one such path. 

In this case, we consider the two neighbours $w_1$ and $w_2$ of $v$ on $\bar L_1$ so that $v\notin w_1\bar L_1s_1$ and $v\notin w_2\bar L_1t_1$. Since $\dist_{Q^{d+1}}(v,v^{o})= d+1\ge 3$, we have that $\pi_{F^o}(w_1)\ne v^o$ and $\pi_{F^o}(w_2)\ne v^o$. Thus, we find a $\pi_{F^{o}}(w_1)-\pi_{F^{o}}(w_{2})$  path $M_{1}$ in $F^o$ that avoids $v^{o}$. So $L_{1}$ then becomes $s_{1}\bar L_1w_1\pi_{F^{o}}(w_1)M_{1}\pi_{F^{o}}(w_{2})w_2\bar L_1t_{1}$. Finally, we let $L_i:=\bar L_i$ for $i\in [2,k]$, and so $\{L_1,\ldots, L_k\}$ is the desired $Y$-linkage.

%If neither the projection of $s_{1}$ onto $F^{o}$ nor the projection of $t_{1}$ onto $F^{o}$ is $v^{o}$, then find a $\pi_{F^{o}}(s_{1})-\pi_{F^{o}}(t_{1})$ path  $\bar L_{1}$ in $F^{o}$ that avoids $v^{o}$. So $L_{1}$ would then become $s_{1}\pi_{F^{o}}(s_{1})\bar L_{1}\pi_{F^{o}}(t_{1})t_{1}$. If the projection of either $s_{1}$ or  $t_{1}$ onto $F^{o}$ is $v^{o}$, say that of $s_{1}$, then, since $\dist_{Q^{d+1}}(v,v^{o})= d+1\ge 5$ and $\dist_{Q^{d+1}}(s_{1},v)= d\ge 4$, there must be a neighbour $w$ of $s_{1}$ on $L_{1}$ that is different from $v$. 
By symmetry, the proposition also holds if  $|X\cap V(F^{o})|=d+1$. 

\begin{case} $|X\cap V(F^o)| \le |X\cap V(F)|\le d$.\end{case}

It follows that $|X\cap V(F^o)|\le \floor{(d+1)/2}$.
If some terminal $x$ in $F^o$ is adjacent to $v$: $\pi_{F}(x)= v$, without loss of generality, assume that it is  $t_1$. In any case, there is at least one terminal in $F^o$, and we may assume that it is $t_1$. The facet $F$ is a $d$-cube, and so it is $\floor{(d+1)/2}$-linked by \cref{thm:cube}.

%Let $X^{p}:=\pi_{F}(X\setminus \{t_1\})$. The set $X^{p}$ comprises the terminals in $X\cap V(F)$ together with the  projections onto $F$ of the vertices in $X\cap (V(F^o)\setminus \{t_1\})$. Then \[X^{p}=\{\pi_F(s_1),\pi_F(s_2),\pi_F(t_2),\ldots, \pi_F(s_k),\pi_F(t_k)\}.\]

The $k$-linkedness of $F$ ensures that in $F$ there are $k$ disjoint paths $M_{1}:=\pi_{F}(s_{1})-v$ and  $\bar L_{i}:=\pi_{F}(s_{i})-\pi_{F}(t_{i})$ for $i\in [2,k]$. For each $i\in [2,k]$, the path $\bar L_i$ lies in $F$, is $X$-valid,  and avoids $v$ (and also $v^o\in F^o$). Besides,  for each $j\in [2,k]$, each path $\bar L_{j}$  extends to a path $L_{j}:=s_{j}-t_{j}$, if necessary. It remains to find an $X$-valid path $L_1:=s_1-t_1$ that avoids $v$ and $v^o$.   

Let $S:=\{v^{o}\}\cup ((X\cap V(F^{o}))\setminus \{t_{1}\})$.  Then, by assumption $|X\cap V(F^{o})|\le (d+1)/2$, and so $|S|\le (d+1)/2-1+1\le d-1$ for $d\ge 3$.

 Let $w\in V(F)$ be a neighbour of $v$ on $M_1$. This neighbour exists, since $\pi_F(s_1)\neq v$, and so the length of $M_1$ is at least 1. The projection $\pi_{F^o}(w)$ of $w$ onto $F^o$ is not in $ X\setminus\{s_1\}$ because $w\notin X^p\setminus \{\pi_F(s_1)\}$ and the path $M_1$ is disjoint from the paths $\bar{L}_i$, $i\in [2,k]$. By the $d$-connectivity of $F^o$ we can find a path $\bar L_1$ in $F^o$ from $\pi_{F^o}(w)$ to $t_1$ that avoids $S$. Hence the path $L_{1}$ then becomes $s_{1}\pi_F(s_1)M_1w\pi_{F^{o}}(w)\bar L_{1}t_{1}$. The set $\{L_1,\ldots,L_k\}$ is the desired $Y$-linkage.

This completes the proof of the case and of the proposition.   
\end{proof}

\section{Acknowledgements} The authors would like to thank the anonymous referees for their comments and suggestions. The presentation of the paper has greatly benefited from their input.

%---REFERENCES---

\providecommand{\bysame}{\leavevmode\hbox to3em{\hrulefill}\thinspace}
\providecommand{\MR}{\relax\ifhmode\unskip\space\fi MR }
% \MRhref is called by the amsart/book/proc definition of \MR.
\providecommand{\MRhref}[2]{%
  \href{http://www.ams.org/mathscinet-getitem?mr=#1}{#2}
}
\providecommand{\href}[2]{#2}

\end{document}